\newtheorem{theorem}{Theorem}[section]
\newtheorem*{thmA}{Theorem}
\newtheorem{lemma}[theorem]{Lemma}
\theoremstyle{definition}
\theoremstyle{remark}
\newtheorem{remark}[theorem]{Remark}
\numberwithin{equation}{section}
\begin{document}

\title[On Petty's conjectured projection inequality]
 {A local uniqueness theorem for minimizers of Petty's conjectured projection inequality}
\author[M.N. Ivaki]{Mohammad N. Ivaki}
\address{Institut f\"{u}r Diskrete Mathematik und Geometrie, Technische Universit\"{a}t Wien,
Wiedner Hauptstr. 8--10, 1040 Wien, Austria}
\email{mohammad.ivaki@tuwien.ac.at}

\dedicatory{}
\subjclass[2010]{}
\keywords{projection body, Petty's conjectured projection inequality}
\begin{abstract}
Employing the inverse function theorem on Banach spaces, we prove that in a $C^{2}(S^{n-1})$-neighborhood of the unit ball, the only solutions of $\Pi^2K=cK$ are origin-centered ellipsoids. Here $K$ is an $n$-dimensional convex body, $\Pi K$ is the projection body of $K$ and $\Pi^2K=\Pi(\Pi K).$
\end{abstract}

\maketitle
\section{introduction}
The setting of this paper is $n$-dimensional Euclidean space $\mathbb{R}^n$. A compact convex subset of $\mathbb{R}^{n}$ with non-empty interior is called a \emph{convex body}. The set of convex bodies in $\mathbb{R}^{n}$ is denoted by $\mathrm{K}^n$. Write $\mathrm{K}^n_e$ for the set of origin-symmetric convex bodies. Also, write $B^n$ for the unit ball of $\mathbb{R}^n$ and $S^{n-1}$ for the unit sphere of $\mathbb{R}^n$.

The support function of $K\in \mathrm{K}^n$, $h_K:S^{n-1}\to\mathbb{R}$, is defined by \[h_{K}(u):=\max_{x\in K}x\cdot u.\]
The space of support functions of bodies in $\mathrm{K}^n_e$ is denoted by $\mathrm{S}_e^n.$

Let $K$ be a convex body in $\mathbb{R}^n$, $n\geq 3.$ The projection body $\Pi K$ of $K$ is the origin-symmetric convex body whose support function is given by
\[h_{\Pi K}(u)=V_{n-1}(K|u^{\perp}),\]
for all $u\in S^{n-1}.$ Here, $V_{n-1}(K|u^{\perp})$ is the $(n-1)$-dimensional Lebesgue measure of the orthogonal projection of $K$ onto the subspace $u^{\perp}=\{x\in \mathbb{R}^n;~ x\cdot u=0\}$.

Petty's conjectured projection inequality states that for $n\ge3$ the minimum of the affine invariant quantity $\mathcal{P}(K):=V(\Pi K)/V(K)^{n-1}$ is attained precisely by ellipsoids; cf. \cite{Lut0,Lut1,Petty1,Petty}. Here $V(K)$ is the $n$-dimensional Lebesgue measure of $K.$ As Schneider \cite[p.~570]{Schneider} observes, if $K$ minimizes $\mathcal{P}$ then $\Pi^2K=cK+\vec{a}$ for some constant $c$ and vector $\vec{a}\in\mathbb{R}^n;$ therefore, the classification of solutions to $\Pi^2K=cK$ is of major interest; see also \cite[Problem 12.7]{Lut1} and \cite[Problems 4.5, 4.6]{Gardner}. Note that $\Pi B^n=\omega_{n-1}B^n$ and $\Pi^2B^n=\omega_{n-1}^nB^n,$ where $\omega_{k}$ is the volume of $B^{k}.$ Thus in view of $\Pi (\phi K)=|\det \phi|\phi^{-t}(\Pi K)$ for any $\phi\in \operatorname{Gl}_n$\footnote{The only $\operatorname{Sl}_n$-contravariant valuations on polytopes that are translation invariant are multiples of the projection operator; see \cite{Lud1,Lud2}.} (see, e.g., \cite[Theorem 4.1.5]{Gardner}), any origin-centered ellipsoid is also a solution. Weil \cite{W1} proved that the only polytopes that solve $\Pi^2K=cK+\vec{a}$ are the direct sums of centrally symmetric polygons or segments.  In view of Weil's result, to use the inverse function theorem on Banach spaces effectively, we restrict our attention to convex solutions with support functions of class $C^2$.
\begin{thmA}
Suppose $n\ge 3.$ There exists $\varepsilon>0$ with the following property. If $\Pi^2K=cK$ for some $c>0$, $h_{K}\in C^2(S^{n-1})$, and $\|h_{\phi K}-1\|_{C^{2}}\leq \varepsilon$ for some $\phi\in \operatorname{Gl}_n$, then
$K$ is an origin-centered ellipsoid.
\end{thmA}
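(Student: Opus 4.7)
My plan is to recast $\Pi^2K=cK$ as a nonlinear operator equation $\mathcal F(h,c):=h_{\Pi^2K_h}-ch=0$ between Banach spaces of support functions, identify the kernel of its linearisation at $h=1$ with the tangent space to the $n(n+1)/2$-dimensional family of origin-centred ellipsoids, and close by a Lyapunov--Schmidt/implicit function argument on a transverse slice. The covariance $\Pi^2(\phi K)=|\det\phi|^{n-2}\phi\,\Pi^2K$ renders the solution set $\operatorname{Gl}_n$-invariant, so the hypothesis lets me replace $K$ by $\phi K$ and assume $\|h_K-1\|_{C^2}\le\varepsilon$ directly. Since $\Pi^2K$ is always origin-symmetric, the equation forces $K$ itself to be origin-symmetric, and the entire argument takes place in the even H\"older space $C^{2,\alpha}_e(S^{n-1})$ (the passage from $C^2$ to $C^{2,\alpha}$ is by standard elliptic regularity for the Monge--Amp\`ere-type operator defining $\Pi$). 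Setting $c_0=\omega_{n-1}^n$ one has $\mathcal F(1,c_0)=0$.

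Starting from $h_{\Pi K}(u)=\tfrac12\int_{S^{n-1}}|u\cdot v|\det(\nabla^2h+hI)(v)\,d\sigma(v)$ and the homogeneity $\Pi(\lambda h)=\lambda^{n-1}\Pi h$, the chain rule yields $D\Pi(1)f=\tfrac12 C\bigl[(\Delta+(n-1)I)f\bigr]$ and $D\Pi^2(1)=\omega_{n-1}^{n-2}(D\Pi(1))^2$, with $C$ the spherical cosine transform. Thus $D\mathcal F(1,c_0)$ is a Fourier multiplier on spherical harmonics: on $H_k$ for even $k$ it acts by the scalar $\omega_{n-1}^{n-2}\mu_k^2-c_0$, where $\mu_k=\tfrac12\lambda_k\bigl(n-1-k(k+n-2)\bigr)$ and $\lambda_k$ is the $k$-th cosine-transform eigenvalue. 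The origin-centred ellipsoids form a smooth submanifold $\mathcal M\subset\mathcal F^{-1}(0)$ (each $\phi B^n$ solves the equation with $c_\phi=\omega_{n-1}^n|\det\phi|^{n-2}$), and differentiating $h(u)=|\phi u|$ at $\phi=I$ identifies $T_1\mathcal M$ with $H_0\oplus H_2$. Matching components pins down the multiplier on $H_2$ (it must vanish, i.e. $\omega_{n-1}^{n-2}\mu_2^2=c_0$), while on $H_0$ it equals $n(n-2)\omega_{n-1}^n\neq0$ for $n\ge 3$ and is balanced by the $\dot c$ coordinate.

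The technical heart is the spectral-gap estimate $|\omega_{n-1}^{n-2}\mu_k^2-c_0|\ge\delta>0$ for every even $k\ge 4$, which reduces via Funk--Hecke or the explicit Fourier expansion of $|u\cdot v|$ to a closed-form check on the $\lambda_k$; this is the step I expect to be the main obstacle, particularly for small $n$ where $\lambda_k$ decays slowly. Granting it, a Lyapunov--Schmidt splitting $C^{2,\alpha}_e=(H_0\oplus H_2)\oplus W$ and $C^\alpha_e=(H_0\oplus W)\oplus H_2$ makes the restriction of $D\mathcal F(1,c_0)$ to $W\times\mathbb R$ a bounded isomorphism onto $H_0\oplus W$. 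The implicit function theorem then produces, for each small $(f_0,f_2)\in H_0\oplus H_2$, a unique small $(w,c)$ making the $H_0\oplus W$-component of $\mathcal F(1+f_0+f_2+w,c)$ vanish. Because each nearby ellipsoid is already a full solution of $\mathcal F=0$, uniqueness forces $(w,c)$ to coincide with that ellipsoid's corresponding data, so the remaining $H_2$-component (the bifurcation equation) is identically zero on $\mathcal M$. Hence every zero of $\mathcal F$ in a small $C^{2,\alpha}$-neighbourhood of $(1,c_0)$ lies on $\mathcal M$, and unwinding the $\operatorname{Gl}_n$ reduction gives the stated theorem.
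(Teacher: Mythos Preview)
Your overall strategy---linearize the fixed-point equation at the unit ball, identify the kernel with the tangent space to origin-centred ellipsoids, then invoke an implicit-function/Lyapunov--Schmidt argument---is exactly the paper's. The genuine gap lies in the step you flag least. The spectral-gap estimate is \emph{not} the obstacle: via $\mathcal{C}\Box=2(n-1)\omega_{n-1}\mathcal{R}$ the multiplier on $H_{2k}$ reduces to $\omega_{n-1}^n\bigl((n-1)^2v_{2k,n}^2-1\bigr)$, and $(n-1)v_{2k,n}<1$ for every $k\geq 2$ is immediate from the explicit Funk--Hecke values. What is hard is turning that $L^2$ spectral gap into a bounded inverse on $C^2$ or $C^{2,\alpha}$. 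Your linearization is $-c_0I+T$ with $T$ a constant multiple of $\mathcal{R}^2$, and $\mathcal{R}^2$ gains only $n-2$ Sobolev derivatives. For $n=3$ that is one derivative---too little to embed $\mathcal{R}^2(L^2)$ back into $C^2$, and too little to make $T$ compact on $C^{2,\alpha}_e$---so neither a solve-in-$L^2$-then-bootstrap argument nor the Fredholm alternative delivers the claimed bounded isomorphism onto $H_0\oplus W$. The paper's remedy is to replace $\Pi^2K=cK$ by its consequence $\Pi^8K=c'K$ before linearizing; then the smoothing operator becomes $\mathcal{R}^8$, gaining $4(n-2)\geq 4$ derivatives, enough for surjectivity in $C^2$. (The paper's lemma solving $g-(n-1)^{2m}\mathcal{R}^{2m}g=h$ in $C^2_e$ carries the explicit hypothesis $m\geq 4$, which encodes precisely the Sobolev embedding $H^{m(n-2)}_e\hookrightarrow C^2$.) Without this iteration your argument does not close for small $n$.

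Two smaller points compound this. The ``standard elliptic regularity'' bootstrap from $C^2$ to $C^{2,\alpha}$ is unavailable: $h\mapsto h_{\Pi^2K_h}$ is nonlocal (cosine transform of a curvature function, not a differential operator), and iterating $h_K\in C^2\Rightarrow f_K\in C^0\Rightarrow h_{\Pi K}\in C^2\Rightarrow f_{\Pi K}\in C^0\Rightarrow h_{\Pi^2K}\in C^2$ returns only $C^2$. And your codomain splitting $C^\alpha_e=(H_0\oplus W)\oplus H_2$ is inconsistent if $W$ is the same subspace of $C^{2,\alpha}_e$ used in the domain, since then $H_0\oplus W\oplus H_2\subsetneq C^\alpha_e$; taking domain and codomain both equal to $C^{2,\alpha}_e$ is the natural fix, but then you are back to needing $T$ compact there, which is exactly the smoothing deficit above.
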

Let us note that this result also follows from the recent work of Saroglou and Zvavitch \cite{SZ}, see also Remark \ref{keyrem}. It would be unfair if we do not mention the work of Fish, Nazarov, Ryabogin and Zvavich (FNRZ) \cite{FNRZ} where the idea of considering the iteration problems locally was initiated. In \cite{FNRZ}, FNRZ treated a similar question to the one considered here for iterations of the intersection body operator; the proof of Saroglou and Zvavitch is an adaption of the argument of \cite{FNRZ}. We believe that the techniques of \cite{FNRZ,SZ} apply to several other open problems and that our method here also applies to many other open problems such as the second mixed projection problem, the second mixed intersection problem and the projection centroid conjectures \cite{Gardner,Lut1} and yields local uniqueness theorems similar to the main theorem here. Due to notational complexity which would result in loss of clarity in exposition, we decided to treat these aforementioned problems elsewhere.

For the definitions of $W_{n-2}$ and $\Pi_{n-2}$ we refer the reader to (\ref{1}) and (\ref{2}).
In \cite{SZ}, the authors proved a strengthened version of Lutwak's inequality (cf. \cite{Lut}):
\begin{align}\label{SZ}
W_{n-2}(\Pi_{n-2}K)-\frac{n(n-2)\omega_{n-1}^2}{(n-1)^2\omega_n}W^2_{n-1}(K)-\frac{\omega_{n-1}^2}{(n-1)^2}W_{n-2}(K)\geq 0.
\end{align}
Furthermore, they conjectured that equality holds only for balls. In Section \ref{sec 5}, we give a direct proof of this inequality and we prove the equality holds precisely for convex bodies whose support functions lie in the linear span of spherical harmonics of degree less than or equal to two; see \cite[Theorem 5.7.4]{Groemer} for a geometric interpretation of such convex bodies.
\section*{Acknowledgment}
The work of the author was supported by Austrian Science Fund (FWF) Project M1716-N25 and the European Research Council (ERC) Project 306445. I would like to thank Saroglou and Zvavitch for a discussion related to Section \ref{sec 5}. I would like to thank immensely the referee for the very constructive suggestions that have led to a much better presentation of the original manuscript.
\section{Background}
In this section, we collect several standard definitions and facts from convex geometry. Most of the material here is taken from \cite{Gardner, Groemer, Schneider}.

A convex body is said to be of class $C^{2}_{+}$ if its boundary hypersurface is two times continuously differentiable, in the sense of differential geometry, and the Gauss map $\nu:\partial K\to S^{n-1}$, which takes $x$ on the boundary of $K$ to its unique outer unit normal vector $\nu(x)$, is well-defined and a $C^{1}$-diffeomorphism.
\subsection{Mixed volume, mixed surface area measure, and mixed projection}\label{basic}
Let $K,L$ be two convex bodies and $0<t<\infty$. The Minkowski sum $K+tL$ is defined by $h_{K+tL}:=h_K+th_L$ and the mixed volume $V_1(K,L)$ of $K$ and $L$ is defined by
\[V_1(K,L):=\frac{1}{n}\lim_{t\to0^{+}}\frac{V(K+tL)-V(K)}{t}.\]
A fundamental fact is that corresponding to each convex body $K$, there is a unique Borel measure $S(K,\cdot)$ on the unit sphere such that
\[V_1(K,L)=\frac{1}{n}\int\limits_{S^{n-1}}h_L(u)dS(K,u)\]
for any convex body $L$. The measure $S(K,\cdot)$ is called the surface area measure of $K.$ A convex body $K$ is said to have a positive continuous curvature function $f_K$, defined on the unit sphere, provided that for each convex body $L$
\[V_1(K,L)=\frac{1}{n}\int\limits_{S^{n-1}}h_Lf_Kdx,\]
where $dx$ is the spherical Lebesgue measure on $S^{n-1}.$ A convex body can have at most one curvature function; see \cite[p.~115]{bon}.
If $K$ is of class $C^2_+$, then $S_K$ is absolutely continuous with respect to $x$, and the Radon-Nikodym derivative $dS_K/dx:S^{n-1}\to\mathbb{R}$ is the reciprocal Gauss curvature of $\partial K$ (viewed as a function of the outer unit normal vectors) given by $f_K$. For every $K\in \mathrm{K}^{n},$ $V(K)=V_1(K,K).$

The mixed volume $V(K_1,\ldots,K_{n})$ and mixed area measure $S(K_1,\ldots,K_{n-1},\cdot)$ of the compact convex subsets $K_1,\ldots,K_{n}$ of $\mathbb{R}^n$ are respectively defined by
\begin{align*}
V(K_1,\ldots,K_{n})&:=\frac{1}{n!}\sum_{j=1}^n(-1)^{n+j}\sum_{i_1<\ldots<i_j}V(K_{i_1}+\cdots+K_{i_j}),\\
S(K_1,\ldots,K_{n-1},\cdot)&:=\frac{1}{(n-1)!}\sum_{j=1}^{n-1}(-1)^{n+j-1}\sum_{i_1<\ldots<i_j}S(K_{i_1}+\cdots+K_{i_j},\cdot).
\end{align*}
Furthermore, they are related by
\[V(K_1,\ldots,K_{n})=\frac{1}{n}\int\limits_{S^{n-1}}h_{K_n}(u)dS(K_{1},\ldots,K_{n-1},u).\]

For real symmetric $(n-1)\times(n-1)$ matrices $A_1,\ldots,A_{n-1}$, write $D(A_1,\ldots,A_{n-1})$ for their mixed discriminant; see \cite[(2.64), (5.117)]{Schneider}. Let $S_{n-1}$ be the group of all permutations of the set $\{1,\ldots,n-1\}$ and let $\varepsilon:S_{n-1}\to \{-1,1\}$ be defined by $\varepsilon(\sigma)=1$ $(-1)$ if $\sigma$ is even (odd). The mixed discriminant of functions $f_i\in C^{2}(S^{n-1}), 1\leq i\leq n-1,$ is a multilinear operator defined as
\begin{align}\label{def q}
\mathcal{Q}(f_1,\ldots,f_{n-1})&:=D\left(A[f_1],\ldots, A[f_{n-1}]\right)\\
&=\frac{1}{(n-1)!}\sum_{\delta,\tau\in S_{n-1}}\varepsilon(\delta\tau)\prod_{i=1}^{n-1}(A[f_i])_{\delta(i)\tau(i)},\nonumber
\end{align}
where in a local orthonormal frame of $S^{n-1}$ the entries of the matrix $A[f_k]$ are given by $(A[f_k])_{ij}=\nabla_i\nabla_jf_k+\delta_{ij}f_k$ and $\nabla$ is the covariant derivative on $S^{n-1}.$ The operator $\mathcal{Q}$ enjoys several important properties, for example, it is independent of the order of its arguments; see \cite[Lemma 2-12]{An}.
\begin{remark}
Suppose $f,g\in C^2(S^{n-1}).$ For convenience we will put
\begin{align*}
\Box f:=\Delta f+(n-1)f,~\mathcal{Q}(f):=\mathcal{Q}(f,\ldots,f),~ \mathcal{Q}_1(f,g):=\mathcal{Q}(f,\ldots,f,g).
\end{align*}
Note that $\mathcal{Q}(f)=\det (A[f])$ and a simple calculation shows that $\mathcal{Q}_1(1,f)=\frac{1}{n-1}\Box f$.
\end{remark}
Let $K$ and $K_i$ be $C^2_+$ convex bodies. Using \cite[(2.68), (5.48)]{Schneider}, for any Borel set $\omega\subset S^{n-1}$, we have
\begin{align*}
S(K_1,\ldots,K_{n-1},\omega)&=\int\limits_{\omega}\mathcal{Q}(h_{K_1},\ldots,h_{K_{n-1}})(x)dx,\\
S(B^n,\dots,B^n,K,\omega)&=\frac{1}{n-1}\int\limits_{\omega}\Box h_K(x)dx.
\end{align*}
The mixed projection $\Pi(K_1,\ldots,K_{n-1})$ of convex bodies $K_1,\ldots,K_{n-1}$ is a convex body whose support function is given by
\begin{align*}
h_{\Pi(K_1,\ldots,K_{n-1})}(u)&:=V_{n-1}(K_1|u^{\perp},\ldots,K_{n-1}|u^{\perp})\\
&=\frac{1}{2}\int\limits_{S^{n-1}}|u\cdot v|dS(K_1,\ldots,K_{n-1},v)\\
&=nV(K_1,\ldots,K_{n-1}, \bar{u}),
\end{align*}
where $\bar{u}$ is the segment joining $-u/2$ and $u/2$; see \cite[page 570]{Schneider}.
\begin{remark}
For simplicity, we write
\[\Pi(K,\ldots,K,L)=\Pi_1(K,L).\]
\end{remark}
\subsection{Spherical Harmonics}
Write $L^2(S^{n-1})$ for the Hilbert space of square-integrable real functions on $S^{n-1}$ equipped with the scalar product
\[(f,g):=\int\limits_{S^{n-1}}fgdx.\]
The induced norm by this scalar product is symbolized by $\|\cdot\|_2.$

Spherical harmonics of degree $k$ are eigenfunctions of the spherical Laplace operator $\Delta$ with the eigenvalue $k(k+n-2).$ In fact, if $Y_k$ is such function then
\[\Delta Y_k=-k(k+n-2)Y_k.\]
The set $\mathcal{S}^k$ of spherical harmonics of degree $k$ is a vector subspace of $C(S^{n-1}).$ Moreover,
$\operatorname{dim}\mathcal{S}^k=N(n,k):=\frac{2k+n-2}{k+n-2}\binom{k+n-2}{k}
.$ In each space $\mathcal{S}^k,$ choose an orthonormal basis
$\{Y_{k,1},\ldots,Y_{k,N(n,k)}\}$. Note that
\[Y_{0,1}=\frac{1}{\sqrt{n\omega_n}}.\]
For $f\in L^2(S^{n-1})$ we write
\[\pi_kf:=\sum_{l=1}^{N(n,k)}\left(f,Y_{k,l}\right)Y_{k,l},\quad\pi_0f=\frac{1}{n\omega_n}\int\limits_{S^{n-1}}fdx.\]
The condensed harmonic expansion of $f$ is given by
\[f\sim \sum_{k=0}^{\infty}\pi_kf;\]
it converges to $f$ in the $L^2(S^{n-1})$-norm. In addition, for $f,g\in L^2(S^{n-1})$ we have
\[\sum_{k=0}^{\infty}\sum_{l=1}^{N(n,k)}\left(f,Y_{k,l}\right)\left(g,Y_{k,l}\right)=\left(f,g\right).\]
Note that $f\in L^2(S^{n-1})$ if and only if its condensed harmonic expansion satisfies
\[\sum_{k=0}^{\infty}\|\pi_kf\|_2^2<\infty.\]
One can read more about spherical harmonics in \cite{Groemer}.
\subsection{Radon transform and cosine transform}
Suppose that $f$ is a Borel function on $S^{n-1}.$ The spherical Radon transform (also
known as the Funk Transform; see, for example, \cite{helgason}) and cosine transform of $f$ are defined as follows
\begin{align*}
\mathcal{R}f(u):=\frac{1}{(n-1)\omega_{n-1}}\int\limits_{S^{n-1}\cap u^{\perp}}f(x)dx,\quad
\mathcal{C}f(u):=\int\limits_{S^{n-1}}|u\cdot x|f(x)dx.
\end{align*}
We normalized $\mathcal{R}$ so that $\mathcal{R}1=1.$
The transformations $\mathcal{R}$ and $\mathcal{C}$ are self-adjoint, in the sense that if $f$ and $g$ are bounded Borel functions on $S^{n-1},$ then
\[\int\limits_{S^{n-1}}f(x)\mathcal{R}g(x)dx=\int\limits_{S^{n-1}}g(x)\mathcal{R}f(x)dx,~\int\limits_{S^{n-1}}f(x)\mathcal{C}g(x)dx=
\int\limits_{S^{n-1}}g(x)\mathcal{C}f(x)dx.\]
For a continuous function $f\in C(S^{n-1})$ the explicit expressions for the first and second derivatives of $\mathcal{C}f$ are given in \cite[Theorem 1, Lemma 1]{Yves}: Let $f$ be a continuous function on $S^{n-1}.$ Then the positively 1-homogeneous function
\begin{align*}
h: \mathbb{R}^n\to \mathbb{R},\quad p\mapsto \int\limits_{S^{n-1}}|p\cdot x|f(x)dx,
\end{align*}
is of class $C^2$ on $\mathbb{R}^n-\{0\}.$ Its first differential and second differential at $p,$ considered as a bilinear form on $\mathbb{R}^n,$ are given by
\begin{align}\label{second1}
d_ph=\int\limits_{S^{n-1}}\operatorname{sgn}(x\cdot p)xf(x)dx,\quad d_p^2h(x,y)=\frac{2}{\|p\|}\int\limits_{S^{n-1}\cap p^{\perp}}(q\cdot x)(q\cdot y)f(q)dq.
\end{align}
for all $x,y \in\mathbb{R}^n.$

Identities (\ref{second1}) imply that if for a sequence $\{f_i\}_i\subset C(S^{n-1})$, $f_i\to_{C} f\in C(S^{n-1})$ then $\mathcal{C}f_i\to_{C^2} \mathcal{C}f\in C^2(S^{n-1}).$
Thus $\mathcal{C}:C(S^{n-1})\to C^2(S^{n-1})$ is a continuous linear map from a Banach space to a Banach space; therefore, it is bounded. That is, there exists a constant $c_n>0$ such that for any $f\in C(S^{n-1})$ there holds
\begin{align}\label{second}
\|\mathcal{C}f\|_{C^2}\leq c_n \|f\|_{C}.
\end{align}
The following relation between Radon transform and cosine transform is established in \cite[Proposition 2.1]{Goodey}:
\begin{align}\label{C and R}
\Box\mathcal{C}=2(n-1)\omega_{n-1}\mathcal{R}.
\end{align}
Let $H^s(S^{n-1}),$ $s\ge 0$,  be the spaces of those functions for which the spherical harmonic expansion satisfies
$
\|f\|_{H^s}^2:=\sum_{k=0}^{\infty}(1+k^2)^s\|\pi_kf\|_2^2<\infty.
$
The following results about the smoothing property of $\mathcal{R}, ~\mathcal{C}$ are proved in \cite{STRICHARTZ}:
\begin{align}\label{STRICHARTZ R}
\|\mathcal{R}f\|_{H^{s+\frac{n-2}{2}}}\leq a_{s,n}\|f\|_{H^s},\quad
\|\mathcal{C}f\|_{H^{s+\frac{n+2}{2}}}\leq b_{s,n}\|f\|_{H^s}
\end{align}
for some positive constants depending on $s$ and $n.$
Let us put $$H_e^s(S^{n-1}):=\{f\in H^s(S^{n-1});~ f(x)=f(-x),~ \forall x\in S^{n-1}\}.$$ Strichartz proved that $H_e^s(S^{n-1})$ is precisely the space of even functions $f\in L^2(S^{n-1})$ with derivatives up to order $s$ in $L^2(S^{n-1});$ see \cite[Pages 721-722]{STRICHARTZ}. Hence $H_e^s(S^{n-1})$, $s\ge0$, are Sobolev spaces.
\section{a local diffeomorphism}
Define a map by
\begin{align*}
\mathcal{X}_m: \mathrm{S}_e^n&\to C_e(S^{n-1}),\quad
h_K\mapsto-h_{\Pi^{2m}K}+\left(\frac{V(\Pi^{2m}K)}{V(K)}\right)^{\frac{1}{n}}h_K.
\end{align*}
Clearly $\mathcal{X}_m$ is a continuous nonlinear map.
\begin{remark}\label{rem: key}
Let $\tilde{U}$ be a $C^2_e(S^{n-1})$-neighborhood of $0$ such that for every $f\in \tilde{U},$ $1+f>0$ and the matrix $A[1+f]$ is positive definite. Thus for each $f\in \tilde{U},$ $1+f$ represents the support function of an origin-symmetric convex body of class $C^{2}_+.$ Also note that if $K$ is of class $C^2_+$, then $\Pi^{k}K$ is also of class $C^2_+;$ see \cite[p. 13]{Yves}. Putting these two facts together implies that $\mathcal{X}_m(1+\cdot)$ maps $\tilde{U}$ to a subset of $C_e^2(S^{n-1}).$
\end{remark}
To employ the inverse function theorem, it is convenient to work with the map
\begin{align*}
\mathcal{Y}_m&:\tilde{U}\subset C_e^2(S^{n-1})\to C_e^2(S^{n-1})\\
\mathcal{Y}_m(f)&:=\mathcal{X}_m(1+f).
\end{align*}
The derivative of $\mathcal{Y}_m:\tilde{U}\subset C_e^2(S^{n-1})\to C_e^2(S^{n-1})$ at the point $f\in \tilde{U}$ in the direction $g\in C_e^2(S^{n-1})$ is defined by
\begin{equation}\label{derv def}
\lim_{t\to0}\left\|\frac{\mathcal{Y}_m(f+tg)-\mathcal{Y}_m(f)}{t}-D\mathcal{Y}_m(f,g)\right\|_{C^2}=0.
\end{equation}
Similar definitions apply to the higher derivatives. The $k+1$-th derivative is defined by induction:
\begin{align*}
D^{k+1}\mathcal{Y}_m&:\tilde{U}\times \left(C_e^2(S^{n-1})\right)^{(k+1)}\to C_e^2(S^{n-1})\\
D^{k+1}\mathcal{Y}_m(f,g_1,\ldots,g_{k+1})&=\lim_{t\to0}\frac{D^{k}\mathcal{Y}_m(f+tg_{k+1},g_1,\ldots,g_{k})-D^{k}\mathcal{Y}_m(f,g_1,\ldots,g_{k})}{t}.
\end{align*}
The limit is taken in the $C^2$-norm. We prove that $\mathcal{Y}_m$ is smooth; that is,
\begin{enumerate}
  \item $\mathcal{Y}_m$ is continuous,
  \item the limits above exist for all $k\ge 0,$ $f\in \tilde{U}$ and \[(g_1,\ldots,g_{k+1})\in \left(C_e^2(S^{n-1})\right)^{(k+1)}=\underbrace{C_e^2(S^{n-1})\times\cdots\times C_e^2(S^{n-1})}_{k+1~\text{times}},\]
  \item $D^{k+1}\mathcal{Y}_m$ is continuous jointly as a function on the product space.
\end{enumerate}
There is a large difference in what it means for $D\mathcal{Y}_m: \tilde{U}\times C^2_e(S^{n-1})\to C^2_e(S^{n-1})$ to be continuous, as opposed to $D\mathcal{Y}_m: \tilde{U}\subset C^2_e(S^{n-1})\to L(C^2_e(S^{n-1}),C^2_e(S^{n-1}))$ (thus our continuity assumption is weaker and easier to check); see \cite[Definition 3.1.1]{Hamilton}.

Then we continue by providing the explicit expression of $D\mathcal{Y}_m(0,\cdot)$ and a description of $\operatorname{Ker}D\mathcal{Y}_m(0,\cdot).$ The description of the kernel yields that
$$D\mathcal{Y}_m(0,\cdot)+\pi_0+\pi_2:C_e^2(S^{n-1})\to C_e^2(S^{n-1})$$ is an isomorphism, so by the inverse function theorem (see, \cite[Theorem 5.2.3]{Hamilton}) there are open neighborhoods $U,~W$ of $0$ in $C^2_e(S^{n-1})$, such that the map
\begin{align*}
\mathcal{N}:U\subset \tilde{U}&\to W\subset C_e^2(S^{n-1}),\quad f\mapsto\mathcal{Y}_m(f)+(\pi_0+\pi_2)f
\end{align*}
is a smooth diffeomorphism.
\begin{lemma}\label{lem 4} For $K\in\mathrm{K}^n_e$ of class $C^2_+$ and $g\in C_e^2(S^{n-1})$ we have
\begin{align*}
\frac{d}{dt}\Big|_{t=0}h_{\Pi^{k}(h_K+tg)}=
\frac{(n-1)^k}{2^k}\mathcal{C}\mathcal{Q}_1(h_{\Pi^{k-1}K},\mathcal{C}\mathcal{Q}_1(h_{\Pi^{k-2}K},\mathcal{C}\mathcal{Q}_1(\ldots,\mathcal{C}\mathcal{Q}_1(h_K,g)\cdots))),
\end{align*}
$$\frac{d}{dt}\Big|_{t=0}h_{\Pi^{k}(1+tg)}=\frac{\left(\frac{V(\Pi^{k}B^n)}{V(B^n)}\right)^{\frac{1}{n}}}{2^k\omega_{n-1}^k}(\mathcal{C}\Box)^{(k)} g,$$
where $(\mathcal{C}\Box)^{(k)} g:=\underbrace{\mathcal{C}\Box\ldots\mathcal{C}\Box}_{k~\text{times}} g.$
\end{lemma}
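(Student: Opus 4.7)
The plan is to prove the first identity by induction on $k$ and then derive the second as a corollary by specializing to $K = B^n$. The key ingredient is the representation
\[
h_{\Pi L}(u) = \tfrac{1}{2}\,\mathcal{C}\bigl(\mathcal{Q}(h_L)\bigr)(u),
\]
valid for any origin-symmetric $L$ of class $C^2_+$; it follows from the integral formula for $h_{\Pi L}$ in Section~\ref{basic} combined with $dS(L,\cdot)/dx = \mathcal{Q}(h_L)$ recorded there. By Remark~\ref{rem: key} and the fact that every $\Pi^j K$ remains of class $C^2_+$, this representation applies to each $h_{\Pi^j(h_K + tg)}$ for sufficiently small $t$ throughout the induction.

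For the base case $k=1$, I would exploit that $\mathcal{Q}$ is a symmetric $(n-1)$-linear form and $A[\cdot]$ is linear, so the chain rule gives $\frac{d}{dt}\big|_{t=0}\mathcal{Q}(h_K + tg) = (n-1)\mathcal{Q}_1(h_K, g)$; applying the bounded operator $\tfrac{1}{2}\mathcal{C}$ (see (\ref{second})) then yields the claim. For $k \ge 2$, write $h_{\Pi^k(h_K + tg)} = \tfrac{1}{2}\mathcal{C}\mathcal{Q}(h_{\Pi^{k-1}(h_K+tg)})$, apply the chain rule and the base case at the body $\Pi^{k-1}K$ to obtain
\[
\frac{d}{dt}\bigg|_{t=0} h_{\Pi^k(h_K+tg)} = \tfrac{n-1}{2}\,\mathcal{C}\,\mathcal{Q}_1\!\left(h_{\Pi^{k-1}K},\ \frac{d}{dt}\bigg|_{t=0} h_{\Pi^{k-1}(h_K+tg)}\right),
\]
and then insert the induction hypothesis into the inner derivative to reach the nested expression in the statement.

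For the second identity, I would specialize to $K = B^n$ and use that $\Pi(rB^n) = \omega_{n-1} r^{n-1} B^n$, so $\Pi^j B^n = r_j B^n$ with $r_0 = 1$, $r_j = \omega_{n-1} r_{j-1}^{n-1}$, and in particular $r_k = (V(\Pi^k B^n)/V(B^n))^{1/n}$. Multilinearity of $\mathcal{Q}$ together with $\mathcal{Q}_1(1, h) = \tfrac{1}{n-1}\Box h$ gives $\mathcal{Q}_1(r_j, h) = \tfrac{r_j^{n-2}}{n-1}\Box h$, so the nested expression from part one collapses to a constant multiple of $(\mathcal{C}\Box)^{(k)} g$. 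A short telescoping using the recursion $r_j = \omega_{n-1} r_{j-1}^{n-1}$ identifies that constant as $r_k/(2^k \omega_{n-1}^k)$, which is the claim.

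The main obstacle, as I see it, is not the algebra but the justification that the pointwise directional derivatives above coincide with Fr\'echet derivatives on the Banach space $C_e^2(S^{n-1})$, as implicitly required by the limit in (\ref{derv def}). However, $\mathcal{Q}$ depends polynomially on the first and second covariant derivatives of its arguments, and $\mathcal{C}\colon C(S^{n-1}) \to C^2(S^{n-1})$ is bounded by (\ref{second}), so the composition $h_L \mapsto \mathcal{C}\mathcal{Q}(h_L)$ is smooth as a map between Banach spaces on the open set $\tilde U$. This promotes the pointwise derivatives to honest Fr\'echet derivatives, and the same reasoning upgrades the iterated compositions in the inductive step.
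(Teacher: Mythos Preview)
Your proposal is correct and follows essentially the same route as the paper: both use the representation $h_{\Pi L}=\tfrac12\,\mathcal{C}\mathcal{Q}(h_L)$, differentiate via the multilinearity of $\mathcal{Q}$, and induct on $k$, then specialize to $K=B^n$ using $\mathcal{Q}_1(r,h)=\tfrac{r^{n-2}}{n-1}\Box h$. The only cosmetic differences are that the paper tracks an explicit $o(t^2,2)$ remainder in the $C^2$-norm (where you invoke smoothness of polynomial maps and boundedness of $\mathcal{C}$), and the paper verifies the constant $\prod_j r_j^{\,n-2}=r_k/\omega_{n-1}^k$ by a separate induction rather than your telescoping.
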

\begin{proof}
In this proof, $f=g+o(t^l,k)$ means that $\|f-g\|_{C^k}\leq ct^l$ for some $c>0.$
Take $t$ sufficiently small enough so that $h_K+tg$ is the support function of a $C^2_+$ convex body which we denote also by $h_K+tg$. By (\ref{def q}), we have
\[\mathcal{Q}(h_K+tg,\ldots,h_K+tg)=\mathcal{Q}(h_K)+(n-1)t\mathcal{Q}_1(h_K,g)+o(t^2,0).\]
Hence in view of the definition of mixed projection and (\ref{second}) we get
\begin{align*}
h_{\Pi(h_K+tg)}&=h_{\Pi K}+\frac{t(n-1)}{2}\mathcal{C}\mathcal{Q}_1(h_K,g)+o(t^2,2).
\end{align*}
If $t$ is small enough, then $h_{\Pi(h_K+tg)}$ also represents a $C^2_+$ convex body, so
\begin{align*}
h_{\Pi^2(h_K+tg)}&=h_{\Pi^2K}+\frac{t(n-1)^2}{4}\mathcal{C}\mathcal{Q}_1(h_{\Pi K},\mathcal{C}\mathcal{Q}_1(h_K,g))+o(t^2,2).
\end{align*}
By induction and (\ref{second}), for $t$ small enough, we obtain
\begin{align}\label{som}
&\frac{h_{\Pi^{k}(h_K+tg)}-h_{\Pi^{k}K}}{t}\\
&=
\frac{(n-1)^k}{2^k}\mathcal{C}\mathcal{Q}_1(h_{\Pi^{k-1}K},\mathcal{C}\mathcal{Q}_1(h_{\Pi^{k-2}K},\mathcal{C}\mathcal{Q}_1(\ldots,\mathcal{C}\mathcal{Q}_1(h_K,g)\cdots)))+o(t,2)\nonumber.
\end{align}
This proves the first assertion. For $K=B^n,$ (\ref{som}) yields
\begin{align*}
h_{\Pi^{k}(1+tg)}&=h_{\Pi^{k}B^n}+\frac{t}{2^{k}}
\left(\prod_{i=1}^k c_i\right)^{n-2}(\mathcal{C}\Box)^{(k)} g+o(t^2,2),
\end{align*}
where $c_i$ are defined through $c_1=1$ and $\Pi^{i-1}B^n=c_iB^n.$ So we need to verify that
\[\left(\prod_{i=1}^k c_i\right)^{n-2}=\frac{\left(\frac{V(\Pi^{k}B^n)}{V(B^n)}\right)^{\frac{1}{n}}}{\omega_{n-1}^k}.\]
We prove the claim by induction. The claim clearly holds for $k=1.$ Suppose for some $k\ge 1,$
$\left(\prod_{i=1}^k c_i\right)^{n-2}=\frac{\left(V(\Pi^{k}B^n)/V(B^n)\right)^{1/n}}{\omega_{n-1}^k}.$
Hence we see that
\[\left(\prod_{i=1}^{k+1} c_i\right)^{n-2}=\frac{\left(\frac{V(\Pi^{k}B^n)}{V(B^n)}\right)^{\frac{1}{n}}}{\omega_{n-1}^k}c_{k+1}^{n-2}.\]
Since $\Pi^{k}B^n=c_{k+1}B^n,$ we get $\left(\prod_{i=1}^{k+1} c_i\right)^{n-2}=\frac{c_{k+1}^{n-1}}{\omega_{n-1}^k}.$ Also, we have $c_{k+2}=c_{k+1}^{n-1}\omega_{n-1};$ therefore, we arrive at
\[\left(\prod_{i=1}^{k+1} c_i\right)^{n-2}=\frac{c_{k+2}}{\omega_{n-1}^{k+1}}\Rightarrow \left(\prod_{i=1}^{k+1} c_i\right)^{n-2}=\frac{\left(\frac{V(\Pi^{k+1}B^n)}{V(B^n)}\right)^{\frac{1}{n}}}{\omega_{n-1}^{k+1}}.\]
\end{proof}
\begin{remark}The following remarks are in order.
\begin{enumerate}
\item In Lemma \ref{lem 4}, since $g\in C_e^2(S^{n-1})$, we have $(\mathcal{C}\Box)^{(k)}g \in C_e^2(S^{n-1})$; see \cite{Yves}.
\item Let us put $\alpha_{\Pi^kK}(g):=\frac{d}{dt}\Big|_{t=0}h_{\Pi^{k}(h_K+tg)},~ \beta_{\Pi^kK}(g):=\frac{d}{dt}\Big|_{t=0}f_{\Pi^{k}(h_K+tg)}.$ Using Lemma \ref{lem 4}, we have
\begin{align}
\alpha_{\Pi K}(g)&=\frac{n-1}{2}\mathcal{C}\mathcal{Q}_1(h_{K},g),\nonumber\\
\alpha_{\Pi^kK}(g)&=\frac{n-1}{2}\mathcal{C}\mathcal{Q}_1(h_{\Pi^{k-1}K},\alpha_{\Pi^{k-1}K}(g)),\label{derv induc}\\
\beta_{\Pi^kK}(g)&=(n-1)\mathcal{Q}_1(h_{\Pi^kK},\alpha_{\Pi^k K}(g)).\nonumber
\end{align}
\end{enumerate}
\end{remark}
\begin{lemma}\label{lemma 5} For $K\in\mathrm{K}^n_e$ of class $C^2_+$ and $g\in C_e^2(S^{n-1})$  we have
\begin{align*}
&\frac{d}{dt}\Big|_{t=0}V(\Pi^{k}(h_K+tg))\\
&=\frac{(n-1)^{k}}{2^{k-1}}\int_{S^{n-1}}g\mathcal{Q}_1(h_K,\mathcal{C}\mathcal{Q}_1(h_{\Pi K},
\mathcal{C}\mathcal{Q}_1(\ldots,\mathcal{C}\mathcal{Q}_1(h_{\Pi^{k-1}K},h_{\Pi^{k+1}K})\cdots)))dx,
\end{align*}
\[\frac{d}{dt}\Big|_{t=0}V(\Pi^{k}(1+tg))=\frac{\left(\frac{V(\Pi^{k}B^n)}{V(B^n)}\right)^{\frac{1}{n}}}{2^k\omega_{n-1}^k} \left((\Box\mathcal{C})^{(k)}f_{\Pi^{k}B^n}\right)\int_{S^{n-1}}gdx.\]
\end{lemma}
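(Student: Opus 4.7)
The plan is to reduce the first identity to the standard first-variation formula for volume, and then iteratively unravel the expression for $\alpha_{\Pi^kK}(g)$ given by Lemma \ref{lem 4} using only three tools: self-adjointness of $\mathcal{C}$, the projection identity $\mathcal{C}f_{\Pi^jK}=2h_{\Pi^{j+1}K}$ (which is just the defining formula for $h_{\Pi L}$ in Section 2 applied to $L=\Pi^j K$), and the symmetry $\int\phi\,\mathcal{Q}_1(h_L,\psi)\,dx=\int\psi\,\mathcal{Q}_1(h_L,\phi)\,dx$ (which comes from the symmetry of the mixed discriminant / mixed area measure).

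First I would record the classical variational identity
\[\frac{d}{dt}\bigg|_{t=0}V(M_t)=\int_{S^{n-1}}\dot h_{M_0}\,f_{M_0}\,dx\]
for any smooth one-parameter family of $C^2_+$ bodies. This follows directly from $V(M)=\tfrac1n\int h_M\mathcal{Q}(h_M)\,dx$, the expansion $\mathcal{Q}(h+t\psi)=\mathcal{Q}(h)+t(n-1)\mathcal{Q}_1(h,\psi)+O(t^2)$, and the mixed-volume symmetry $\int h_K\mathcal{Q}_1(h_K,\psi)\,dx=\int\psi f_K\,dx$. Applying it to $M_t=\Pi^k(h_K+tg)$ gives
\[\frac{d}{dt}\bigg|_{t=0}V(\Pi^k(h_K+tg))=\int_{S^{n-1}}\alpha_{\Pi^kK}(g)\,f_{\Pi^kK}\,dx.\]

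Next I would run the peeling argument. Substituting the recursion $\alpha_{\Pi^kK}(g)=\tfrac{n-1}{2}\mathcal{C}\mathcal{Q}_1(h_{\Pi^{k-1}K},\alpha_{\Pi^{k-1}K}(g))$, then transferring $\mathcal{C}$ onto $f_{\Pi^kK}$ (self-adjointness) and using $\mathcal{C}f_{\Pi^kK}=2h_{\Pi^{k+1}K}$, and finally commuting $\mathcal{Q}_1$ past the remaining factor by the symmetry identity, yields
\[(n-1)\int \alpha_{\Pi^{k-1}K}(g)\,\mathcal{Q}_1(h_{\Pi^{k-1}K},h_{\Pi^{k+1}K})\,dx.\]
Iterating this three-step maneuver once for each of $\alpha_{\Pi^{k-1}K},\ldots,\alpha_{\Pi K}$ contributes an additional factor $\tfrac{n-1}{2}$ and one new nested layer $\mathcal{C}\mathcal{Q}_1(h_{\Pi^{j-1}K},\cdot)$ per step; the iteration terminates after $k-1$ further steps when we reach $\alpha_K(g)=g$ (immediate from $h_{h_K+tg}=h_K+tg$). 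The total prefactor is $(n-1)(\tfrac{n-1}{2})^{k-1}=(n-1)^k/2^{k-1}$, as claimed.

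For the second identity I would specialize: since $\Pi^jB^n$ is a ball for every $j$, the curvature function $f_{\Pi^kB^n}$ is constant, so instead of the peeling procedure it is cleaner to invoke the second formula of Lemma \ref{lem 4} directly. Inserting the explicit expression for $\alpha_{\Pi^kB^n}(g)$ into the first-variation formula and exploiting that $\mathcal{C}$ and $\Box$ are each self-adjoint (the latter by integration by parts on $S^{n-1}$), one transfers $(\mathcal{C}\Box)^{(k)}$ from $g$ to obtain $(\Box\mathcal{C})^{(k)}f_{\Pi^kB^n}$, which is a constant and therefore factors out of the integral against $g$.

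The only step that requires genuine care is justifying the symmetry $\int\phi\,\mathcal{Q}_1(h_L,\psi)\,dx=\int\psi\,\mathcal{Q}_1(h_L,\phi)\,dx$ for arbitrary $\phi,\psi\in C^2(S^{n-1})$ rather than just for support functions; this follows from the symmetry of the mixed discriminant $D$ together with the fact that any $C^2$ function may be written as a difference of two support functions after adding a sufficiently large multiple of $1$, which reduces the identity to the standard symmetry of mixed volumes recalled in Section \ref{basic}. Everything else is bookkeeping of the $(n-1)/2$ factors and the order in which the $\mathcal{C}$'s move.
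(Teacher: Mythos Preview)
Your argument is correct and follows essentially the same route as the paper's proof: start from the first-variation formula $\frac{d}{dt}\big|_{t=0}V(\Pi^k(h_K+tg))=\int\alpha_{\Pi^kK}(g)f_{\Pi^kK}\,dx$, then peel off one layer of $\alpha$ at a time using the recursion (\ref{derv induc}), self-adjointness of $\mathcal{C}$, the identity $\mathcal{C}f_{\Pi^jK}=2h_{\Pi^{j+1}K}$, and the $\mathcal{Q}_1$-symmetry (\ref{xxx}). The only cosmetic differences are that the paper cites \cite[Lemma 2-12]{An} directly for (\ref{xxx}) rather than reducing to mixed-volume symmetry via differences of support functions, and that the paper dismisses the $K=B^n$ case with ``follows similarly'' where you spell out the transfer of $(\mathcal{C}\Box)^{(k)}$ to $(\Box\mathcal{C})^{(k)}$.
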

\begin{proof}
Let $L$ be a convex body of class $C^2_+.$ By \cite[Lemma 2-12]{An}, for $f\in C_e^2(S^{n-1})$ we have
\begin{align}\label{xxx}
\int_{S^{n-1}}f\mathcal{Q}_1(h_L,g)dx=\int_{S^{n-1}}g\mathcal{Q}_1(h_L,f)dx.
\end{align}
Therefore, we obtain
\[\frac{d}{dt}\Big|_{t=0}V(\Pi^{k}(h_K+tg))=
\int_{S^{n-1}}\alpha_{\Pi^{k}K}(g)f_{\Pi^{k}K}dx.\]
Note that $\mathcal{C}f_{\Pi^{k}K}=2h_{\Pi^{k+1}K}$. Using (\ref{derv induc}), $\mathcal{C}$ is self-adjoint and (\ref{xxx}), we calculate
\begin{align*}
&\int_{S^{n-1}}\alpha_{\Pi^{k}K}(g)f_{\Pi^{k}K}dx\\
&=(n-1)\int_{S^{n-1}}\mathcal{Q}_1(h_{\Pi^{k-1}K},\alpha_{\Pi^{k-1}K}(g))h_{\Pi^{k+1}K}dx \\
&=(n-1)\int_{S^{n-1}}\alpha_{\Pi^{k-1}K}(g)\mathcal{Q}_1(h_{\Pi^{k-1}K},h_{\Pi^{k+1}K})dx \\
&=\frac{(n-1)^2}{2}\int_{S^{n-1}}\mathcal{Q}_1(h_{\Pi^{k-2}K},\alpha_{\Pi^{k-2}K}(g))\mathcal{C}\mathcal{Q}_1(h_{\Pi^{k-1}K},h_{\Pi^{k+1}K})dx \\
&=\frac{(n-1)^{k}}{2^{k-1}}\int_{S^{n-1}}g\mathcal{Q}_1(h_K,\mathcal{C}\mathcal{Q}_1(h_{\Pi K},
\mathcal{C}\mathcal{Q}_1(\ldots,\mathcal{C}\mathcal{Q}_1(h_{\Pi^{k-1}K},h_{\Pi^{k+1}K})\cdots)))dx.
\end{align*}
The second claim follows similarly.
\end{proof}
\begin{lemma}\label{cont key}
The map $\mathcal{Y}_m:\tilde{U}\to C_e^2(S^{n-1})$ is smooth.
\end{lemma}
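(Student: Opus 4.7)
The plan is to decompose $\mathcal{Y}_m$ into a composition of elementary maps, each of which is manifestly smooth, and then invoke the chain rule together with the invariance from Remark \ref{rem: key} to iterate.

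First, I would analyze the building block $f\mapsto h_{\Pi(1+f)}$. For $1+f$ the support function of a $C^2_+$ body, one has $h_{\Pi(1+f)}=\tfrac{1}{2}\mathcal{C}(\mathcal{Q}(1+f))$. The map $F:C_e^2(S^{n-1})\to C_e(S^{n-1})$, $f\mapsto \mathcal{Q}(1+f)$, is a polynomial map of degree $n-1$: each entry of the matrix $A[1+f]$ is affine in $f$ and its second covariant derivatives, and $\mathcal{Q}$ is the determinant of this matrix, a polynomial in its entries. Since continuous multilinear maps between Banach spaces are smooth, $F$ is smooth. Composing with the bounded linear map $\tfrac{1}{2}\mathcal{C}:C_e(S^{n-1})\to C_e^2(S^{n-1})$ furnished by (\ref{second}) shows that $f\mapsto h_{\Pi(1+f)}-1$ is a smooth map into $C_e^2(S^{n-1})$ on a neighborhood of $0$.

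Second, by Remark \ref{rem: key} the operator $\Pi$ preserves the class $C^2_+$, so after shrinking $\tilde{U}$ if necessary I can guarantee that for every $f\in\tilde{U}$ each of the iterated support functions $h_{\Pi^{k}(1+f)}-h_{\Pi^{k}B^n}$, $0\le k\le 2m$, remains in the open set where $A[h_{\Pi^{k}(1+f)}]$ is positive definite. Iterating the smooth building block $2m$ times gives that $f\mapsto h_{\Pi^{2m}(1+f)}$ is smooth from $\tilde{U}$ into $C_e^2(S^{n-1})$. The volume functional $V(1+f)=\tfrac{1}{n}\int_{S^{n-1}}(1+f)\mathcal{Q}(1+f)\,dx$ is a polynomial-integral functional on $C_e^2(S^{n-1})$ and is smooth as a real-valued function; the same applies to $V(\Pi^{2m}(1+f))$. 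Since both volumes are positive near $f=0$, composing with the real-analytic function $x\mapsto x^{1/n}$ on $(0,\infty)$ shows that $f\mapsto \bigl(V(\Pi^{2m}(1+f))/V(1+f)\bigr)^{1/n}$ is smooth.

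Combining the above, $\mathcal{Y}_m(f)$ is the sum of a smooth $C_e^2$-valued map with the product of a smooth real-valued map and the smooth affine map $f\mapsto 1+f$, and is therefore smooth. The main technical point is to verify the weaker form of joint continuity of $D^{k+1}\mathcal{Y}_m$ on the product space described after (\ref{derv def}); this is automatic from the joint continuity of composition, addition, and scalar multiplication in the relevant Banach spaces, together with the fact that each building block $\mathcal{C}\circ F$ is itself a composition of a bounded linear map and a polynomial map. The only step that needs some care is matching the Fréchet derivatives to the pointwise Gâteaux derivatives computed in Lemmas \ref{lem 4} and \ref{lemma 5}; the gain of two derivatives furnished by the cosine transform in (\ref{second}) absorbs the regularity loss incurred by $\mathcal{Q}$ at every iterate, which is precisely what makes this bookkeeping go through.
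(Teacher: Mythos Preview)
Your argument is correct. Both your approach and the paper's rest on the same structural fact---namely that $h_{\Pi K}=\tfrac12\mathcal{C}\mathcal{Q}(h_K)$ with $\mathcal{Q}$ a polynomial differential operator $C^2\to C^0$ and $\mathcal{C}:C^0\to C^2$ bounded linear---and both iterate this block $2m$ times. The difference is only in packaging: the paper establishes smoothness by first producing the explicit derivative formulas of Lemmas~\ref{lem 4} and~\ref{lemma 5} and then checking joint continuity of $D\mathcal{Y}_m$ directly along sequences (using $h_{K_i}\to_{C^2}h_K\Rightarrow f_{K_i}\to_{C^0}f_K\Rightarrow h_{\Pi K_i}\to_{C^2}h_{\Pi K}$), whereas you invoke once and for all that bounded multilinear maps between Banach spaces are Fr\'echet smooth and that smoothness is stable under composition. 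Your route is cleaner and yields the stronger Fr\'echet smoothness for free; the paper's route is more hands-on but has the advantage that the explicit expressions for $D\mathcal{Y}_m(0,\cdot)$ needed immediately afterwards (Lemmas following~\ref{cont key}) are already on the table. Your closing remark that the two-derivative gain from $\mathcal{C}$ in~(\ref{second}) exactly compensates the loss from $\mathcal{Q}$ at each iterate is precisely the mechanism the paper exploits as well.
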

\begin{proof}
Lemmas \ref{lem 4} and \ref{lemma 5} prove that the limit in (\ref{derv def}) exists  and it is linear in the second argument.
We prove that $D\mathcal{Y}_m: \tilde{U}\times C_e^2(S^{n-1})\to C_e^2(S^{n-1})$ is continuous (the following argument also shows the continuity of $\mathcal{Y}_m$).
Suppose $f_i\to_{C^2} f\in \tilde{U} $ and $g_i\to_{C^2}g\in C_e^2(S^{n-1}).$ There are convex bodies $\{K,K_i\}$ of class $C^2_+$ such that $1+f_i=h_{K_i}$ and $1+f=h_K.$
Since $h_{K_i}\to_{C^2} h_{K},$  we have $f_{K_i}\to_C f_{K}$ and so $$2h_{\Pi K_i}=\mathcal{C}f_{K_i}\to_{C^2} \mathcal{C}f_{K}=2h_{\Pi K}.$$
By induction, $h_{\Pi^lh_{K_i}}\to_{C^2}h_{\Pi^lh_{K}}$ for all $1\leq l\leq 2m+1.$ The continuity now follows from
$D\mathcal{Y}_m(f_i,g_i)=\frac{d}{dt}\Big|_{t=0}\mathcal{X}_m(h_{K_i}+tg_i)$ and Lemmas \ref{lem 4} and \ref{lemma 5}.

Note that $D^2\mathcal{X}_m(h_{K},g_1,g_2)=\frac{d}{dt}\Big|_{t=0}D\mathcal{X}_m(h_{K}+tg_2,g_1).$ Since sufficient conditions for interchanging of differentiation and integration are satisfied, by (\ref{second}) and (\ref{derv induc}), we conclude that $D^2\mathcal{X}_m$ exists and is continuous. The existence and continuity of higher derivatives follow by induction.
\end{proof}
\begin{remark}
Our argument above shows that in fact for any $K$ of class $C^2_+$, there is a $C^2$-neighborhood of $h_K$ such that $\mathcal{X}_m$ is smooth.
\end{remark}
The next lemma provides an interesting expression for $D\mathcal{Y}_m(0,\cdot)$ in terms of the spherical Radon transform.
\begin{lemma}
For any $g\in C^2_e(S^{n-1})$ we have
\[D\mathcal{Y}_m(0,g)=\left(\frac{V(\Pi^{2m}B^n)}{V(B^n)}\right)^{\frac{1}{n}}\left(g-(n-1)^{2m}\mathcal{R}^{2m} g+\frac{(n-1)^{2m}-1}{n\omega_n}\int\limits_{S^{n-1}} gdx\right).\]
\end{lemma}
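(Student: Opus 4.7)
The plan is to compute $D\mathcal{Y}_m(0,g)=\frac{d}{dt}\big|_{t=0}\mathcal{Y}_m(tg)$ by differentiating each of the three contributions to $\mathcal{Y}_m(f)=-h_{\Pi^{2m}(1+f)}+(V(\Pi^{2m}(1+f))/V(1+f))^{1/n}(1+f)$ separately. Setting $\lambda:=(V(\Pi^{2m}B^n)/V(B^n))^{1/n}$, linearity yields
\begin{align*}
D\mathcal{Y}_m(0,g)=-\left.\frac{d}{dt}\right|_{t=0}h_{\Pi^{2m}(1+tg)}+\left.\frac{d}{dt}\right|_{t=0}\!\left(\frac{V(\Pi^{2m}(1+tg))}{V(1+tg)}\right)^{1/n}+\lambda g.
\end{align*}
I will compute the first two terms, match them against the target, and read off the formula.

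The first piece is supplied directly by the second identity of Lemma \ref{lem 4}: it equals $\frac{\lambda}{2^{2m}\omega_{n-1}^{2m}}(\mathcal{C}\Box)^{(2m)}g$. The crucial observation is that $\mathcal{C}$ and $\Box$ are both rotation-equivariant linear operators, so by the Funk--Hecke theorem they share the orthonormal eigenbasis of spherical harmonics and therefore \emph{commute} on $C^2(S^{n-1})$. Combined with the Goodey identity (\ref{C and R}), i.e.\ $\Box\mathcal{C}=2(n-1)\omega_{n-1}\mathcal{R}$, this gives $(\mathcal{C}\Box)^{(2m)}=(2(n-1)\omega_{n-1})^{2m}\mathcal{R}^{2m}$, so the first term contributes $-\lambda(n-1)^{2m}\mathcal{R}^{2m}g$.

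For the ratio term, I apply Lemma \ref{lemma 5}. In the notation of the proof of Lemma \ref{lem 4}, $\Pi^{2m}B^n=c_{2m+1}B^n$, so $\lambda=c_{2m+1}$ and the curvature function $f_{\Pi^{2m}B^n}$ is the constant $c_{2m+1}^{n-1}$. Since $\mathcal{R}$ fixes constants, iterating (\ref{C and R}) gives $(\Box\mathcal{C})^{(2m)}f_{\Pi^{2m}B^n}=(2(n-1)\omega_{n-1})^{2m}c_{2m+1}^{n-1}$. Plugging this into Lemma \ref{lemma 5} and using $V(\Pi^{2m}B^n)=c_{2m+1}^n\omega_n$, the derivative of the numerator reduces to $c_{2m+1}^n(n-1)^{2m}\int_{S^{n-1}}g\,dx$, while $\frac{d}{dt}\big|_{t=0}V(1+tg)=\int_{S^{n-1}}g\,dx$ by the standard first variation. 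The chain rule for $x\mapsto x^{1/n}$ then collapses the ratio term to $\frac{\lambda((n-1)^{2m}-1)}{n\omega_n}\int_{S^{n-1}}g\,dx$, and summing the three contributions produces the stated identity.

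The only conceptual ingredient is the commutation $\mathcal{C}\Box=\Box\mathcal{C}$; everything else is algebraic bookkeeping. The main pitfall is ensuring that the constants $c_k$ together with the $\omega_{n-1}$ factors conspire to cancel cleanly so that only $\lambda$ and $(n-1)^{2m}$ survive in the final formula, which is exactly what the volume identity $V(\Pi^{2m}B^n)=c_{2m+1}^n\omega_n$ arranges.
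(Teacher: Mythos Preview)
Your proof is correct and follows essentially the same route as the paper: both compute the three contributions via Lemmas \ref{lem 4} and \ref{lemma 5}, then invoke $\mathcal{C}\Box=\Box\mathcal{C}$ together with (\ref{C and R}) to convert $(\mathcal{C}\Box)^{(2m)}$ into $(2(n-1)\omega_{n-1})^{2m}\mathcal{R}^{2m}$. The one organizational difference is that where you chase the constants $c_{2m+1}$ explicitly to evaluate $(\Box\mathcal{C})^{(2m)}f_{\Pi^{2m}B^n}$, the paper instead plugs $g\equiv 1$ into the raw derivative formula and uses the fact that $\mathcal{Y}_m(t\cdot 1)\equiv 0$ (balls are fixed points) to read off the needed identity $\lambda^{1-n}(\Box\mathcal{C})^{(2m)}f_{\Pi^{2m}B^n}=(2(n-1)\omega_{n-1})^{2m}$ without any arithmetic.
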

\begin{proof}
Using Lemmas \ref{lem 4} and \ref{lemma 5}, we calculate
\begin{align}\label{time der}
\frac{d}{dt}\Big|_{t=0}\mathcal{Y}_m(tg)=&-\frac{\left(\frac{V(\Pi^{2m}B^n)}{V(B^n)}\right)^{\frac{1}{n}}}{2^{2m}\omega_{n-1}^{2m}}(\mathcal{C}\Box)^{(2m)} g+\left(\frac{V(\Pi^{2m}B^n)}{V(B^n)}\right)^{\frac{1}{n}}g\nonumber\\
&+\frac{\left(\frac{V(\Pi^{2m}B^n)}{V(B^n)}\right)^{\frac{2}{n}-1}}{2^{2m}\omega_{n-1}^{2m}} \left((\Box\mathcal{C})^{(2m)}f_{\Pi^{2m}B^n}\right)\frac{\int\limits_{S^{n-1}}gdx}{n\omega_n}\\
&-\left(\frac{V(\Pi^{2m}B^n)}{V(B^n)}\right)^{\frac{1}{n}}\frac{1}{n\omega_n}\int\limits_{S^{n-1}}gdx.\nonumber
\end{align}
 Since $\mathcal{Y}_m(t)=0$ for $t$ small enough,
\begin{align*}
\frac{d}{dt}\Big|_{t=0}\mathcal{Y}_m(t)=0\Rightarrow-(\mathcal{C}\Box)^{(2m)}1+\left(\frac{V(\Pi^{2m}B^n)}{V(B^n)}\right)^{\frac{1}{n}-1}
\left((\Box\mathcal{C})^{(2m)}f_{\Pi^{2m}B^n}\right)=0.
\end{align*}
Rearranging the terms yields
\begin{align*}
\left(\frac{V(\Pi^{2m}B^n)}{V(B^n)}\right)^{\frac{1}{n}-1}\left((\Box\mathcal{C})^{(2m)}f_{\Pi^{2m}B^n}\right)=(2\omega_{n-1}(n-1))^{2m}.
\end{align*}
Substituting this last identity into (\ref{time der}) yields
\[D\mathcal{Y}_m(0,g)=\left(\frac{V(\Pi^{2m}B^n)}{V(B^n)}\right)^{\frac{1}{n}}\left(g-\frac{(\mathcal{C}\Box)^{(2m)} g}{2^{2m}\omega_{n-1}^{2m}}+\frac{(n-1)^{2m}-1}{n\omega_n}\int\limits_{S^{n-1}} gdx\right).\]
For $g\in C^2(S^{n-1})$, $\mathcal{C}\Box g=\Box\mathcal{C}g$, so the identity (\ref{C and R}) finishes the proof.
\end{proof}
\begin{lemma}\label{lem: dim ker}
$\operatorname{dim}\operatorname{Ker}D\mathcal{Y}_{m}(0,\cdot)=\frac{n(n+1)}{2}.$
\end{lemma}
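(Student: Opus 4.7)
My plan is to diagonalize $D\mathcal{Y}_m(0,\cdot)$ on $C^2_e(S^{n-1})$ in the basis of spherical harmonics. By the Funk--Hecke theorem (cf.\ \cite{Groemer}), the spherical Radon transform acts on each $\mathcal{S}^k$ as a scalar multiplier: $\mathcal{R}|_{\mathcal{S}^k}=\lambda_{n,k}\operatorname{Id}$ with $\lambda_{n,0}=1$, $\lambda_{n,k}=0$ for odd $k$, and
\[\lambda_{n,k}=(-1)^{k/2}\frac{1\cdot 3\cdots(k-1)}{(n-1)(n+1)\cdots(n+k-3)}\qquad(k\text{ even},\ k\ge 2).\]
Expanding $g=\sum_{k\text{ even}}\pi_k g$ and applying the explicit formula from the previous lemma yields
\[D\mathcal{Y}_m(0,g)=c_{n,m}\sum_{k\text{ even}}\mu_k\,\pi_k g,\qquad c_{n,m}:=\left(\frac{V(\Pi^{2m}B^n)}{V(B^n)}\right)^{1/n},\]
where the two $k=0$ contributions cancel to give $\mu_0=0$, while $\mu_k=1-\bigl((n-1)\lambda_{n,k}\bigr)^{2m}$ for every even $k\ge 2$.

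The heart of the argument is the spectral claim that $(n-1)|\lambda_{n,k}|=1$ holds iff $k=2$. Equality at $k=2$ is immediate from $\lambda_{n,2}=-\frac{1}{n-1}$. At $k=4$ one has $(n-1)|\lambda_{n,4}|=\frac{3}{n+1}<1$ since $n\ge 3$, and the one-step recursion
\[\frac{(n-1)|\lambda_{n,k+2}|}{(n-1)|\lambda_{n,k}|}=\frac{k+1}{n+k-1}<1\qquad(k\ge 2,\ n\ge 3)\]
shows that $(n-1)|\lambda_{n,k}|$ is strictly decreasing in even $k\ge 2$. Hence $\mu_k\ne 0$ for every even $k\ge 4$; this spectral estimate is the only place where the hypothesis $n\ge 3$ is genuinely used, and it is the main step of the argument.

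To finish, if $g\in C^2_e(S^{n-1})$ satisfies $D\mathcal{Y}_m(0,g)=0$ then $L^2$-orthogonality of the harmonic components gives $\mu_k\pi_k g=0$ for every $k$, hence $\pi_k g=0$ for all even $k\ge 4$, and therefore $g\in\mathcal{S}^0\oplus\mathcal{S}^2$; the converse inclusion is automatic since every such $g$ is smooth and $\mu_0=\mu_2=0$. A dimension count
\[\dim\operatorname{Ker}D\mathcal{Y}_m(0,\cdot)=N(n,0)+N(n,2)=1+\frac{(n-1)(n+2)}{2}=\frac{n(n+1)}{2}\]
completes the proof. It is reassuring that $\mathcal{S}^0\oplus\mathcal{S}^2$ is exactly the span of support functions of origin-centered ellipsoids (\cite[Theorem 5.7.4]{Groemer}), which are precisely the trivial zeros of $\mathcal{X}_m$ forced by the $\operatorname{Gl}_n$-covariance of $\Pi$.
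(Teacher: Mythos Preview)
Your proof is correct and follows essentially the same route as the paper: both diagonalize $D\mathcal{Y}_m(0,\cdot)$ via the Funk--Hecke multipliers of $\mathcal{R}$, observe that the $k=0$ and $k=2$ multipliers vanish, and conclude $\operatorname{Ker}D\mathcal{Y}_m(0,\cdot)=\mathcal{S}^0\oplus\mathcal{S}^2$. You actually supply a detail the paper leaves implicit, namely the one--step recursion showing $(n-1)|\lambda_{n,k}|$ is strictly decreasing in even $k\ge 2$, which justifies $\mu_k\neq 0$ for $k\ge 4$. One minor quibble with your closing heuristic: $\mathcal{S}^0\oplus\mathcal{S}^2$ is not literally the span of support functions of origin-centered ellipsoids (those are square roots of quadratic forms), but rather the \emph{tangent space} at $h_{B^n}$ to that family; this does not affect the proof.
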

\begin{proof}
Suppose $g\in C^2_e(S^{n-1})$. Recall from \cite[Lemma 3.4.7]{Groemer} that $$\mathcal{R}\pi_kg=(-1)^{\frac{k}{2}}v_{k,n}\pi_kg,$$ where
\[v_{k,n}=\left\{
    \begin{array}{ll}
      \frac{1\cdot3\ldots(k-1)}{(n-1)(n+1)\ldots(n+k-3)} & \hbox{$k$~\text{even};} \\
      0 & \hbox{$k$~\text{odd}.}
    \end{array}
  \right.
\]
Since $D\mathcal{Y}_{m}(0,1)=0$, we restrict our attention to the space of even functions with $\pi_0g=\frac{1}{n\omega_n}\int_{S^{n-1}} gdx =0.$ Note that $\|g\|_2^2=\sum_{k=1}^{\infty}\|\pi_{2k}g\|_2^2$ and $\mathcal{R}\pi_kg=\pi_k\mathcal{R}g$.\footnote{$\mathcal{R}\pi_kg=(-1)^{\frac{k}{2}}v_{k,n}\pi_kg=\sum\limits_l\int\limits_{S^{n-1}}g\mathcal{R}Y_{k,l}dxY_{k,l}
=\sum\limits_l\int\limits_{S^{n-1}}\mathcal{R}gY_{k,l}dxY_{k,l}=\pi_k\mathcal{R}g.$} For all $j\in \mathbb{N},$ we have
  \[\sum_{k=1}^{j}\pi_{2k}g-(n-1)^{2m}\mathcal{R}^{2m}(\sum_{k=1}^{j}\pi_{2k}g)=\sum_{k=1}^j\left(1-(n-1)^{2m}v_{2k,n}^{2m}\right)\pi_{2k}g,\]
  \[\|g-(n-1)^2\mathcal{R}^{2m}g\|_2^2=\sum_{k=1}^{\infty}\left(1-(n-1)^{2m}v_{2k,n}^{2m}\right)^2\|\pi_{2k}g\|_2^2.\]
Recall that $v_{2,n}=\frac{1}{n-1}.$ Hence $g-(n-1)^{2m}\mathcal{R}^{2m}g=0$ if and only if $\pi_{2k}g=0$ for all $k\ne 0,1.$ Consequently,  $\operatorname{Ker}\mathcal{Y}_{m}(0,\cdot)=\mathcal{S}^0\oplus\mathcal{S}^2,$ which is of dimension $N(n,0)+N(n,2)=\frac{n(n+1)}{2}.$
\end{proof}
\begin{lemma}\label{lem 9}
Suppose $m\ge4.$ Given $h\in C^{2}_e(S^{n-1}) $ with $\pi_kh=0$ for $k= 0, 2$, there exists a unique $g\in C_e^{2}(S^{n-1}) $ with $\pi_kg=0$ for $k= 0, 2$ such that
$$g-(n-1)^{2m}\mathcal{R}^{2m} g=h.$$
\end{lemma}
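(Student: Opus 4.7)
The plan is to reduce to an eigenvalue problem in the spherical harmonic decomposition, solve it at the $L^2$-level, and then bootstrap the solution to $C^2$ by exploiting the smoothing property of $\mathcal{R}$. Set $T := (n-1)^{2m}\mathcal{R}^{2m}$. It is self-adjoint on $L^2_e(S^{n-1})$ and acts on the eigenspace $\mathcal{S}^{2k}$ as multiplication by $\lambda_k := \bigl((n-1)v_{2k,n}\bigr)^{2m}$. From the explicit expression for $v_{2k,n}$ one checks that $(n-1)v_{2k,n}$ is strictly decreasing in $k\ge 1$, equals $1$ at $k=1$, and equals $3/(n+1)\le 3/4$ at $k=2$. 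Consequently, on the closed subspace $V:=\{f\in L^2_e:\pi_0 f=\pi_2 f=0\}$ the operator $I-T$ is a self-adjoint isomorphism whose spectrum is bounded below by $1-(3/4)^{2m}>0$.

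With this in place, given $h\in C^2_e\subset L^2_e$ with $\pi_0 h=\pi_2 h=0$, I would define
\[
g:=\sum_{k\ge 2}(1-\lambda_k)^{-1}\pi_{2k}h\in V.
\]
By construction $\pi_0 g=\pi_2 g=0$ and $(I-T)g=h$ in $L^2$, and uniqueness in $L^2_e$ (hence \emph{a fortiori} in $C^2_e$) is immediate since $I-T$ is injective on $V$. The remaining task is to show that this $g$ is in fact of class $C^2$.

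For the regularity upgrade, I would rewrite the equation as $g=h+Tg$. Since $g\in L^2=H^0$, the Strichartz estimate (\ref{STRICHARTZ R}) applied $2m$ times gives $Tg\in H^{m(n-2)}$. The Sobolev embedding on $S^{n-1}$ yields $H^s\hookrightarrow C^2$ whenever $s>(n-1)/2+2=(n+3)/2$, and the hypothesis $m\ge 4$ ensures $m(n-2)\ge 4(n-2)>(n+3)/2$ for every $n\ge 3$. Hence $Tg\in C^2_e$, and since $h\in C^2_e$ as well, one concludes $g=h+Tg\in C^2_e$.

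The only genuine obstacle is this last step, and it is precisely what forces $m\ge 4$: for $n=3$ the smoothing gain per application of $\mathcal{R}$ is only half a derivative, so several iterations are needed to clear the Sobolev threshold $(n+3)/2=3$. The $L^2$-inversion and the eigenvalue estimate are routine spherical-harmonic bookkeeping.
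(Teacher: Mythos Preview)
Your argument is correct and follows essentially the same route as the paper: construct the $L^2$ solution via the spherical-harmonic multiplier $(1-\lambda_k)^{-1}$, then bootstrap to $C^2$ by writing $g=h+(n-1)^{2m}\mathcal{R}^{2m}g$ and invoking Strichartz's smoothing estimate (\ref{STRICHARTZ R}) together with the Sobolev embedding $H^{m(n-2)}\hookrightarrow C^2$ for $m\ge 4$. The paper phrases the bootstrap slightly differently---it first names the $L^2$ solution $f$, observes $\mathcal{R}^{2m}f\in H^{4(n-2)}\subset C^2$, and then sets $g:=h+(n-1)^{2m}\mathcal{R}^{2m}f$, which of course coincides with $f$---but the content is identical.
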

\begin{proof}
We develop $h$ into a series of spherical harmonics:
$h\sim \sum_{k\ne 0,1}^{\infty} \pi_{2k}h.$
Since $L^2(S^{n-1})$ is a complete space and $\lim\limits_{k\to\infty }1-(n-1)^{2m}v_{2k,n}^{2m}= 1$, the $L^2(S^{n-1})$-Cauchy sequence
\begin{align*}
\left\{f_l:=\sum_{k\ne 0,1}^{l} \frac{1}{1-(n-1)^{2m}v_{2k,n}^{2m}}\pi_{2k}h\right\}_l
\end{align*}
converges in the $L^2(S^{n-1})$-norm to a bounded even $f\in L^2(S^{n-1})\cap \left(\mathcal{S}^0\oplus\mathcal{S}^2\right)^{\perp}$ with
$$\pi_{2k}f=\frac{1}{1-(n-1)^{2m}v_{2k,n}^{2m}}\pi_{2k}h$$ for $k\ge 2.$ In view of (\ref{STRICHARTZ R}), $\mathcal{R}^{2m}f\in H_e^{m(n-2)}\subset H_e^{4(n-2)}\subset C^{2}(S^{n-1}).$
Define $$g:=h+(n-1)^{2m}\mathcal{R}^{2m}f.$$ Note that $g\in C^2_e(S^{n-1})\cap \left(\mathcal{S}^0\oplus\mathcal{S}^2\right)^{\perp}$ and for $k\ge 2:$
\[\pi_{2k}g=\left(1+\frac{(n-1)^{2m}v_{2k,n}^{2m}}{1-(n-1)^{2m}v_{2k,n}^{2m}}\right)\pi_{2k}h\Rightarrow\pi_{2k}(g-(n-1)^{2m}\mathcal{R}^{2m}g)=\pi_{2k}h.\]
Since $h$ and $g-(n-1)^{2m}\mathcal{R}^{2m}g$ are $C^2,$
$g-(n-1)^{2m}\mathcal{R}^{2m}g=h.$

\noindent The proof of the uniqueness is elementary.
\end{proof}
\section{proof of the main result}\label{sec4}
In this section, $E$ always represents an ellipsoid
and a vector $a\in \mathbb{R}^{\frac{n(n+3)}{2}}$ is written as $a=(a_0,a_{1,1},\ldots, a_{1,n},a_{2,1},\ldots, a_{2,N(n,2)}).$

Let $\mathcal{O}_1$ be an open ball about the origin in $\mathbb{R}^{\frac{n(n+3)}{2}}$ such that for any $a\in \mathcal{O}_1:$
\begin{enumerate}
  \item $a_0+\sqrt{n}>0,$
  \item $\frac{(a_0+\sqrt{n})^2}{n}+\sum\limits_{i=1}^{N(n,2)}a_{2,i}Y_{2,i}>0,$
  \item $s_a:=\sum_{i=1}^na_{1,i}Y_{1,i}+\left(\frac{(a_0+\sqrt{n})^2}{n}+\sum\limits_{i=1}^{N(n,2)}a_{2,i}Y_{2,i}\right)^{\frac{1}{2}}>0,$
  \item  $s_a$ is the support function of a convex body of class $C^{2}_+$.
\end{enumerate}
The function $\frac{n}{(a_0+\sqrt{n})^2}(s_a-\sum_{i=1}^na_{1,i}Y_{1,i})^2$ is the restriction of a positive definite quadratic form to $S^{n-1}$; therefore,
$s_a$ is the support function of an ellipsoid with the center $(a_{1,1},\ldots,a_{1,n})$.

For an ellipsoid $E$, there exists a vector $a\in \mathbb{R}^{\frac{n(n+3)}{2}}$ with $a_0>-\sqrt{n}$ such that
\begin{equation}\label{characterizes ellipsoids}
h_E=\sum_{i=1}^na_{1,i}Y_{1,i}+\left(\frac{(a_0+\sqrt{n})^2}{n}+\sum_{i=1}^{N(n,2)}a_{2,i}Y_{2,i}\right)^{\frac{1}{2}};
\end{equation}
see the proof of \cite[Theorem 5.8.1]{Groemer}\footnote{Monge has found, an ellipsoid has the following property:
the vertices of all its tangential boxes (rectangular parallelepipeds) lie on a fixed sphere.
Blaschke (for $n=3$) and Chakerian (without any restriction on the dimension) have proven that this property characterizes ellipsoids among all convex bodies. Identity (\ref{characterizes ellipsoids}) is a way of formulating this statement; it says the vertices of all the tangential boxes to $E-(a_{1,1},\ldots,a_{1,n})$ lie on $(a_0+\sqrt{n})S^{n-1}.$}.

Define a map by
\begin{align*}
\xi&: \mathcal{O}_1\subset \mathbb{R}^{\frac{n(n+3)}{2}}\to  C^2(S^{n-1})\\
\xi(a):=&-1+\sum_{i=1}^na_{1,i}Y_{1,i}+\left(\frac{(a_0+\sqrt{n})^2}{n}+\sum_{i=1}^{N(n,2)}a_{2,i}Y_{2,i}\right)^{\frac{1}{2}}.
\end{align*}
The following lemma shows that the space of ellipsoids near the unit ball can be parameterized by $\xi+1.$
\begin{lemma}\label{ellip open}
There exists $0<\delta<\frac{1}{2}$ with the following property. For any ellipsoid $E$ satisfying $\|h_E-1\|_{C}< \delta$, there exists a unique $a\in \mathcal{O}_1$ such that
$\xi(a)+1=h_E.$
\end{lemma}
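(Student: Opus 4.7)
The plan is to use the Monge--Blaschke--Chakerian characterization (\ref{characterizes ellipsoids}) to associate to each ellipsoid $E$ near $B^n$ a vector $a \in \mathbb{R}^{n(n+3)/2}$ with $a_0 > -\sqrt{n}$ satisfying $\xi(a) + 1 = h_E$, and then to show that this $a$ lies in the fixed ball $\mathcal{O}_1$ and is unique there, provided $\delta$ is small enough. The strategy is to extract the three blocks of coordinates $(a_{1,i})_i$, $a_0$, and $(a_{2,i})_i$ separately by exploiting parity and orthogonality of spherical harmonics.

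For existence, let $E$ be an ellipsoid with $\|h_E-1\|_C<\delta$; the cited representation (\ref{characterizes ellipsoids}) produces some $a$ with $\xi(a)+1=h_E$ and $a_0>-\sqrt{n}$, and what has to be checked is that each component of $a$ tends to $0$ as $\delta\to 0$. Decompose $h_E-1$ into its odd and even parts; both have $C$-norm at most $\delta$. Since in (\ref{characterizes ellipsoids}) the $Y_{1,i}$ are odd while the square root is even (being a function of $a_0$ and of even spherical harmonics), the odd part of $h_E$ equals $\sum_i a_{1,i}Y_{1,i}$, and orthonormality of $\{Y_{1,i}\}$ in $L^2(S^{n-1})$ yields $|a_{1,i}|=O(\delta)$. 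Squaring the resulting identity for the even part gives
\[\Bigl(h_E-\sum_{i=1}^n a_{1,i}Y_{1,i}\Bigr)^{2}=\frac{(a_0+\sqrt{n})^2}{n}+\sum_{i=1}^{N(n,2)}a_{2,i}Y_{2,i},\]
a function which differs from $1$ by $O(\delta)$ in $C$. Applying $\pi_0$ to both sides gives $(a_0+\sqrt{n})^2/n=1+O(\delta)$, which together with $a_0>-\sqrt{n}$ forces $|a_0|=O(\delta)$; applying $\pi_2$ and using orthonormality of $\{Y_{2,i}\}$ yields $|a_{2,i}|=O(\delta)$. Choosing $\delta$ smaller than the radius of $\mathcal{O}_1$ divided by the corresponding constant places $a$ inside $\mathcal{O}_1$.

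Uniqueness is immediate from the same decomposition: if $\xi(a)=\xi(a')$ with $a,a'\in\mathcal{O}_1$, comparing odd parts gives $a_{1,i}=a'_{1,i}$, and squaring the equality of the two square-root expressions and projecting onto $\mathcal{S}^0$ and $\mathcal{S}^2$ yields $(a_0+\sqrt{n})^2=(a'_0+\sqrt{n})^2$ and $a_{2,i}=a'_{2,i}$; the constraint $a_0,a'_0>-\sqrt{n}$ then gives $a_0=a'_0$. No real obstacle arises in the argument: the only potentially delicate step (squaring the square root) is justified by the positivity conditions built into $\mathcal{O}_1$, and each of the estimates above is linear in $\delta$ with constants depending only on $n$, so matching the radius of $\mathcal{O}_1$ is trivial.
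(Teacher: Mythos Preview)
Your proposal is correct and follows essentially the same route as the paper: both start from the representation (\ref{characterizes ellipsoids}), read off the $a_{1,i}$ from the first-order (odd) harmonics of $h_E-1$, then square the even remainder and use $L^2$-orthogonality of $\{1\}\cup\{Y_{2,i}\}$ to control $a_0$ and the $a_{2,i}$; uniqueness is handled identically. The only cosmetic difference is that the paper packages the second step as the single identity $\bigl(n\omega_n((a_0+\sqrt{n})^2/n-1)^2+\sum_i a_{2,i}^2\bigr)^{1/2}=\|h_{E-(a_{1,1},\ldots,a_{1,n})}^2-1\|_2$, whereas you phrase it as applying $\pi_0$ and $\pi_2$ separately.
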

\begin{proof}
Take an $a$ with $a_0>-\sqrt{n}$ such that (\ref{characterizes ellipsoids}) holds. The following relations show that if $\delta$ is small enough then $a\in\mathcal{O}_1:$
\begin{enumerate}
  \item $\sum\limits_{i}|a_{1,i}|=\sum\limits_{i}|(h_E-h_{B^n},Y_{1,i})|\leq n\|h_E-1\|_2\leq n(n\omega_n)^{\frac{1}{2}}\delta,$
  \item $
\left(n\omega_n\left(\frac{(a_0+\sqrt{n})^2}{n}-1\right)^2+\sum\limits_{i}a_{2,i}^2\right)^{\frac{1}{2}}=\|h_{E-(a_{1,1},\ldots,a_{1,n})}^2-1\|_2,
$
  \item $\|h_{E-(a_{1,1},\ldots,a_{1,n})}^2-1\|_2\leq (n\omega_n)^{\frac{1}{2}}\|h_{E-(a_{1,1},\ldots,a_{1,n})}^2-1\|_C\leq c(\sum\limits_{i}|a_{1,i}|+\delta),$
  for some positive constant $c$ independent of $E.$
\end{enumerate}
The proof of the uniqueness claim is elementary.
\end{proof}
\begin{lemma}\label{lem: ellipsoids} The following statements hold.
\begin{enumerate}
  \item There exists an origin-centered open ball $\mathcal{O}_2\subset \mathcal{O}_1$ such that for any open set $\mathcal{O}\subset \mathcal{O}_2$, $(\pi_0+\pi_1+\pi_2)(\xi(\mathcal{O}))$ is an open set in $\mathcal{S}^0\oplus\mathcal{S}^1\oplus\mathcal{S}^2\equiv\mathbb{R}^{\frac{n(n+3)}{2}}.$
  \item Put $S:=\{a\in \mathbb{R}^{\frac{n(n+3)}{2}}; a_{1,i}=0, i=1,\ldots,n\}$.
  There exists an origin-centered open ball $\mathcal{O}_2\subset \mathcal{O}_1\cap S$ such that for any open set $\mathcal{O}\subset \mathcal{O}_2$, the set $(\pi_0+\pi_2)(\xi(\mathcal{O}))$ is open in $\mathcal{S}^0\oplus\mathcal{S}^2\equiv\mathbb{R}^{\frac{n(n+1)}{2}}.$
\end{enumerate}
\end{lemma}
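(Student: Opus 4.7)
The plan is to apply the standard inverse function theorem in finite dimensions to the composed map
\[
\Phi := (\pi_0+\pi_1+\pi_2)\circ \xi : \mathcal{O}_1 \subset \mathbb{R}^{\frac{n(n+3)}{2}} \longrightarrow \mathcal{S}^0\oplus\mathcal{S}^1\oplus\mathcal{S}^2 \equiv \mathbb{R}^{\frac{n(n+3)}{2}},
\]
where the target dimension is verified by $N(n,0)+N(n,1)+N(n,2)=1+n+\frac{(n+2)(n-1)}{2}=\frac{n(n+3)}{2}$.

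First I would observe that $\xi$ is smooth in a neighborhood of $a=0$, since the expression under the square root equals $1$ at $a=0$ and therefore stays strictly positive nearby. Next I would compute $D\Phi(0)$ by differentiating $\xi$ componentwise at the origin:
\[
\partial_{a_0}\xi\big|_{a=0} = \frac{1}{\sqrt{n}},\qquad \partial_{a_{1,i}}\xi\big|_{a=0} = Y_{1,i},\qquad \partial_{a_{2,i}}\xi\big|_{a=0}=\frac{1}{2}Y_{2,i}.
\]
Since each partial already sits in the appropriate spherical-harmonic subspace, applying $\pi_0+\pi_1+\pi_2$ is the identity on each, and the matrix of $D\Phi(0)$ in the natural bases is diagonal with nonzero entries $\frac{1}{\sqrt{n}}, 1, \dots, 1, \frac{1}{2}, \dots, \frac{1}{2}$, hence is a linear isomorphism. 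The inverse function theorem then furnishes an open ball $\mathcal{O}_2\subset\mathcal{O}_1$ around the origin on which $\Phi$ is a $C^1$-diffeomorphism onto its image. Since diffeomorphisms are open maps, for any open $\mathcal{O}\subset\mathcal{O}_2$ the set $\Phi(\mathcal{O})=(\pi_0+\pi_1+\pi_2)(\xi(\mathcal{O}))$ is open in $\mathbb{R}^{\frac{n(n+3)}{2}}$; this gives part (1).

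For part (2), I would restrict $\xi$ to $\mathcal{O}_1\cap S$. On $S$ the expression under the square root is an even function on $S^{n-1}$ (only $Y_{0}$- and $Y_2$-type terms appear), so $\xi|_S$ is even and in particular $\pi_1\circ\xi|_S\equiv 0$. I would then apply the same argument to
\[
\Psi := (\pi_0+\pi_2)\circ \xi\big|_{\mathcal{O}_1\cap S} : S \cong \mathbb{R}^{\frac{n(n+1)}{2}} \longrightarrow \mathcal{S}^0\oplus\mathcal{S}^2 \cong \mathbb{R}^{\frac{n(n+1)}{2}}.
\]
Its derivative at $0$ is obtained from $D\Phi(0)$ by deleting the $\mathcal{S}^1$-block, so it is still diagonal with nonzero entries, hence an isomorphism. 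A second invocation of the inverse function theorem produces the required origin-centered open ball $\mathcal{O}_2\subset\mathcal{O}_1\cap S$. There is no genuine obstacle in either part: once the three partial derivatives above are written down, both statements are immediate consequences of the finite-dimensional inverse function theorem.
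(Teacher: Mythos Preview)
Your proposal is correct and follows essentially the same route as the paper: compute $D\xi(0)$, observe that its image already lies in $\mathcal{S}^0\oplus\mathcal{S}^1\oplus\mathcal{S}^2$, read off an invertible diagonal matrix, and invoke the finite-dimensional inverse function theorem. The only cosmetic discrepancy is the first diagonal entry: the paper expresses $\partial_{a_0}\xi(0)$ in the orthonormal basis $Y_{0,1}=(n\omega_n)^{-1/2}$ and records the coefficient $\sqrt{\omega_n}$, whereas you write the same constant function as $1/\sqrt n$; since $\sqrt{\omega_n}\,Y_{0,1}=1/\sqrt n$, these agree and the invertibility conclusion is unaffected.
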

\begin{proof}
We only prove the first claim.
The map $\xi$ is smooth and its derivative at the origin is given by
\begin{align}\label{derivative}
\partial\xi(0)&: \mathbb{R}^{\frac{n(n+3)}{2}}\to C^2(S^{n-1})\\
\partial\xi(0)h=&\sqrt{\omega_n}h_{0,1}Y_{0,1}+\sum_{i=1}^{n} h_{1,i}Y_{1,i}+\frac{1}{2}\sum_{i=1}^{N(n,2)} h_{2,i}Y_{2,i}.\nonumber
\end{align}
Let $\{\lambda_0,\lambda_1,\ldots,\lambda_{\frac{n(n+3)}{2}-1}\}$ be the coordinates of $(\pi_0+\pi_1+\pi_2)\xi(a)$ with respect to the basis $\{Y_{k,l};~ l=1,\ldots, N(n,k),~k=0,1,2\}$ of $\mathcal{S}^0\oplus \mathcal{S}^1\oplus \mathcal{S}^2 $. By (\ref{derivative}), we obtain $$\partial \lambda(0)=\operatorname{diag}(\sqrt{\omega_n},\underbrace{1,\ldots,1}_{n~\text{times}},\frac{1}{2},\ldots,\frac{1}{2}).$$
Hence the inverse function theorem implies that there exists an origin-centered open ball $\mathcal{O}_2\subset \mathcal{O}_1$ such that $\lambda:\mathcal{O}_2\to \lambda(\mathcal{O}_2)$ is a smooth diffeomorphism.
\end{proof}
The proof of the next theorem was inspired by the work of Simon \cite{Simon}.
\begin{theorem}\label{main prop}
Suppose $m\ge 4.$ There exists $\varepsilon_{m}>0$, such that if $h_K$ satisfies
$\mathcal{X}_m(h_K)=0$ and
$\|h_{\phi K}-1\|_{C^{2}}\leq \varepsilon_{m}$ for some $\phi\in \operatorname{Gl}_n,$ then $K$ is an origin-centered ellipsoid.
In particular, if $\Pi^2K=cK$ for some positive constant $c$ and $\|h_{\phi K}-1\|_{C^{2}}\leq \varepsilon_{4}$ for some $\phi\in \operatorname{Gl}_n$, then $K$ is an origin-centered ellipsoid.
\end{theorem}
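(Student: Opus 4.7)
The plan is to exploit the local diffeomorphism $\mathcal{N}=\mathcal{Y}_m+(\pi_0+\pi_2):U\to W$ established above, together with two observations: (i) the equation $\mathcal{X}_m(h_K)=0$ is invariant under $\operatorname{Gl}_n$, and (ii) origin-centered ellipsoids near $B^n$ form a full finite-dimensional family of solutions whose image under $(\pi_0+\pi_2)$ locally fills $\mathcal{S}^0\oplus\mathcal{S}^2$. Observation (i) follows by iterating $\Pi(\phi L)=|\det\phi|\phi^{-t}\Pi L$: one obtains $\Pi^{2m}(\phi K)=\lambda(\phi)\,\phi\,\Pi^{2m}K$ for a positive scalar $\lambda(\phi)$, so $\Pi^{2m}K=cK$ with $c>0$ passes to $\Pi^{2m}(\phi K)=\lambda(\phi)c\,(\phi K)$. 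Replacing $K$ by $\phi K$ (which is still origin-symmetric, since $K$ is), I may assume $\|h_K-1\|_{C^2}\leq\varepsilon_m$; for $\varepsilon_m$ small enough, $f:=h_K-1\in U$.

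For (ii), any origin-centered ellipsoid $E=AB^n$ satisfies $\Pi(AB^n)=\omega_{n-1}|\det A|A^{-t}B^n$, and a short induction shows $\Pi^{2m}E$ is a positive multiple of $E$. Hence $\mathcal{X}_m(h_E)=0$ for every origin-centered ellipsoid.

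The main step: since $\mathcal{Y}_m(f)=0$, the definition of $\mathcal{N}$ gives $\mathcal{N}(f)=(\pi_0+\pi_2)f\in\mathcal{S}^0\oplus\mathcal{S}^2$. By Lemma \ref{lem: ellipsoids}(2), the restriction of $(\pi_0+\pi_2)\circ\xi$ to a small ball $\mathcal{O}_2\subset\mathcal{O}_1\cap S$ is a diffeomorphism onto an open neighborhood of $0$ in $\mathcal{S}^0\oplus\mathcal{S}^2$. After shrinking $\varepsilon_m$ further if necessary, I can therefore produce an origin-centered ellipsoid $E$ close to $B^n$ with $h_E-1\in U$ such that $(\pi_0+\pi_2)(h_E-1)=(\pi_0+\pi_2)f$. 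Since $\mathcal{Y}_m(h_E-1)=0$ by (ii), it follows that $\mathcal{N}(h_E-1)=(\pi_0+\pi_2)(h_E-1)=\mathcal{N}(f)$, and injectivity of the diffeomorphism $\mathcal{N}$ on $U$ forces $f=h_E-1$, i.e., $K=E$.

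For the final assertion, if $\Pi^2K=cK$ with $c>0$, iterating the identity $\Pi(cL)=c^{n-1}\Pi L$ gives $\Pi^{2m}K=c_mK$ with $c_m>0$ for every $m\geq1$; in particular $\mathcal{X}_4(h_K)=0$, so the first part applied with $\varepsilon_4$ concludes. The main delicate point—hardly an obstacle, but the only thing to keep track of—is the compatible choice of $\varepsilon_m$: it must be small enough that $f\in U$, that $(\pi_0+\pi_2)f$ lies inside the open image of $(\pi_0+\pi_2)\circ\xi|_{\mathcal{O}_2\cap S}$, and that the matched ellipsoid $E$ satisfies $h_E-1\in U$; all three conditions are automatic by the continuity of the inverse of the finite-dimensional map $(\pi_0+\pi_2)\circ\xi$ at the origin.
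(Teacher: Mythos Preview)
Your proof is correct and follows essentially the same strategy as the paper: both exploit that $\mathcal{N}=\mathcal{Y}_m+(\pi_0+\pi_2)$ is a local diffeomorphism, that any solution $f$ satisfies $\mathcal{N}(f)=(\pi_0+\pi_2)f\in\mathcal{S}^0\oplus\mathcal{S}^2$, and that the ellipsoid family (via Lemma~\ref{lem: ellipsoids}(2)) fills a neighborhood of the origin in $\mathcal{S}^0\oplus\mathcal{S}^2$. The paper packages this through the auxiliary sets $M=\mathcal{N}^{-1}(W\cap\operatorname{Ker}D\mathcal{Y}_m(0,\cdot))$, $W'$, and $W_1$ to argue set-theoretically that the ellipsoids exhaust the solution set near $0$; you instead match each solution $f$ directly to an ellipsoid $E$ with $(\pi_0+\pi_2)(h_E-1)=(\pi_0+\pi_2)f$ and invoke injectivity of $\mathcal{N}$. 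Your route is slightly more streamlined (it bypasses Lemma~\ref{ellip open} and the $W_1$ construction), while the paper's presentation makes the manifold structure of the solution set more explicit; the underlying mechanism is identical. Your explicit verification of the $\operatorname{Gl}_n$-invariance of $\mathcal{X}_m=0$ is also a welcome detail that the paper leaves implicit for the general $m$ case.
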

\begin{proof}
Consider the map
\begin{align*}
\mathcal{N}&:\tilde{U}\subset C_e^2(S^{n-1})\to C_e^2(S^{n-1}),\quad f\mapsto\mathcal{Y}_m(f)+(\pi_0+\pi_2)f,\quad \mathcal{N}(0)=0.
\end{align*}
\begin{itemize}
  \item By Lemma \ref{cont key}, $\mathcal{N}$ is smooth.
  \item By Lemma \ref{lem 9}, the linear map
  \begin{align*}
  D\mathcal{N}(0,\cdot): C_e^2(S^{n-1})\to C_e^2(S^{n-1}),\quad f\mapsto D\mathcal{Y}_m(0,f)+(\pi_0+\pi_2)f
  \end{align*}
  is an isomorphism.
\end{itemize}
By the inverse function theorem (see, \cite[Theorem 5.2.3, Corollary 5.3.4]{Hamilton}), we can find open neighborhoods $U,~W$ of $0$ in $C_e^{2}(S^{n-1})$, such that $\mathcal{N}: U\subset \tilde{U}\to W$
is a smooth diffeomorphism. Put $M=\mathcal{N}^{-1}(W\cap \operatorname{Ker}D\mathcal{Y}_m(0,\cdot))$. A key observation is that $M$ contains all solutions of $\mathcal{Y}_m(\cdot)=0$ in $U;$ that is,
\begin{align}\label{key observ}
f\in U~\text{and}~\mathcal{Y}_m(f)=0\Rightarrow\mathcal{N}^{-1}((\pi_0+\pi_2) (f))=f\in M.
\end{align}

Define $W':=\{(\pi_0+\pi_2)(h_E-1);~ h_E-1\in U,~E=-E\}.$ By (\ref{key observ}), we have $\mathcal{N}^{-1}(W')=\{h_E-1;~h_E-1\in U,~E=-E\}.$
Also, due to Lemmas \ref{ellip open} and  \ref{lem: ellipsoids}, if $U$ is small enough then $W'$ is open in $W\cap\operatorname{Ker}D\mathcal{Y}_m(0,\cdot).$

Let us define the open set
$W_1:=\{f\in W; (\pi_0+\pi_2)f\in W'\}.$
From the definition of $W'$, it follows that $W'=W_1\cap \operatorname{Ker}D\mathcal{Y}_m(0,\cdot);$ therefore, we have
 \[\mathcal{N}^{-1}(W')=\mathcal{N}^{-1}(W_1\cap W\cap \operatorname{Ker}D\mathcal{Y}_m(0,\cdot))=\mathcal{N}^{-1}
(W_1)\cap M.\]
Note that $\mathcal{N}^{-1}
(W_1)\subset U$ contains an open neighborhood of $0$ in $C_e^2(S^{n-1}).$ Thus there exists $\varepsilon_m>0$, such that
if $\mathcal{Y}_m(f)=0$ and $\|f-1\|_{C^2}\leq \varepsilon_m$, then $$f\in M\cap \mathcal{N}^{-1}
(W_1)=\{h_E-1;~ h_E-1\in U,~E=-E\}.$$
Consequently, in a $C_e^2(S^{n-1})$-neighborhood of $1$ the only solutions of $\mathcal{Y}_m(\cdot-1)=0$ are support functions of origin-centered ellipsoids.

To prove the last statement, observe that if $\Pi^2K=cK$ for some positive constant $c$, then $\Pi^8\phi K=c'\phi K$ for some positive constant $c'.$ So $\mathcal{X}_4(h_{\phi K})=\mathcal{Y}_4(h_{\phi K}-1)=0.$ Hence $K$ must be an origin-centered ellipsoid.
\end{proof}
\begin{remark}\label{keyrem} Here $c_i$ are positive constants depending only on $n.$
Suppose $K$ has a positive continuous curvature function $f_K$. The inequality (\ref{second}) implies that
\[\|\mathcal{C}f_K-\mathcal{C}1\|_{C^2}=\|\mathcal{C}(f_K-1)\|_{C^2}\leq c_1\|f_K-1\|_{C}\Rightarrow \|h_{\Pi K}-h_{\Pi B^n}\|_{C^2}\leq c_2 \|f_K-1\|_{C}.\]
Therefore, if $\|f_K-1\|_{C}$ is small enough, then
\[\|f_{\Pi K}-f_{\Pi B^n}\|_{C}\leq c_3 \|f_K-1\|_{C}\Rightarrow \|h_{\Pi^2 K}-h_{\Pi^2 B^n}\|_{C^2}\leq c_4 \|f_K-1\|_{C}.\]
In particular, for any solution of $\Pi^2K=cK$ there holds
\[\|h_{\psi K}-1\|_{C^2}\leq c_5 \|f_K-1\|_{C},\]
where $\psi=\frac{c}{\omega_{n-1}^{n}}\operatorname{I}_{n\times n}$. Therefore, Theorem \ref{main prop} (or the main result of the paper) holds under the assumption that $\|f_{\phi K}-1\|_C$ is small enough.
\end{remark}
\section{equality cases of (\ref{SZ})}\label{sec 5}
For a body $K\in \mathrm{K}^n$ of class $C^2_+$ define
\[\mathcal{L}h_K:=\Box h_K-(n-1)^2\mathcal{R}^2\Box h_K+\frac{(n-2)(n-1)}{\omega_n}\int\limits_{S^{n-1}} h_Kdx.\]
We will show that
\begin{align*}
&W_{n-2}(\Pi_{n-2}K)-\frac{n(n-2)\omega_{n-1}^2}{(n-1)^2\omega_n}W^2_{n-1}(K)-\frac{\omega_{n-1}^2}{(n-1)^2}W_{n-2}(K)\\
&=-c_n\int\limits_{S^{n-1}} h_K\mathcal{L}h_Kdx=c_n\sum_{k=3}^{\infty}(k-1)(n+k-1)(1-v_{k,n}^2(n-1)^2)\|\pi_kh_K\|^2_2.
\end{align*}
Here, $c_n$ is some positive constant independent of $K.$

By \cite[p. 1109]{GG}, for any convex body $L$ with $h_L\in C^2(S^{n-1})$ we have
\begin{align}\label{1}
W_{n-2}(L):=V(B^n,\ldots,B^n, L,L)=\frac{1}{n(n-1)}\int\limits_{S^{n-1}}h_L\Box h_Ldx.
\end{align}
In addition, for a convex body $L$ of class $C^2_+$ we have
\begin{align}\label{2}
h_{\Pi_{n-2}L}(u)&:=h_{\Pi(B^n,\ldots,B^n,L)}(u)\\
&=\frac{1}{2}\int\limits_{S^{n-1}}|u\cdot x|dS(B^n,\ldots,B^n,L)\nonumber\\
&=\frac{1}{2(n-1)}\mathcal{C}\Box h_L(u).\nonumber
\end{align}
Since $\mathcal{C}\Box h_K\in C^2(S^{n-1}),$ putting (\ref{1}) and (\ref{2}) together yields
\[W_{n-2}(\Pi_{n-2}K)=\frac{1}{4n(n-1)^3}\int\limits_{S^{n-1}}\mathcal{C}\Box h_K\Box\mathcal{C}\Box h_Kdx.\]
We use that both $\mathcal{C}$ and $\Box$ are self-adjoint to obtain
\begin{align*}
W_{n-2}(\Pi_{n-2}K)&=\frac{1}{4n(n-1)^3}\int\limits_{S^{n-1}}\mathcal{C}\Box h_K\Box\mathcal{C}\Box h_Kdx\\
&=\frac{1}{4n(n-1)^3}\int\limits_{S^{n-1}} h_K\Box\mathcal{C}\Box\mathcal{C}\Box h_Kdx.
\end{align*}
Thus by the identity (\ref{C and R}) we arrive at the formula
\begin{align}\label{3}
W_{n-2}(\Pi_{n-2}K)=\frac{\omega_{n-1}^2}{n(n-1)}\int\limits_{S^{n-1}}h_K\mathcal{R}^2\Box h_K dx.
\end{align}
We also need
\begin{align}\label{4}
W_{n-1}(K):=V_1(B^n,K)=\frac{1}{n}\int\limits_{S^{n-1}}h_Kdx.
\end{align}
Combining (\ref{1}), (\ref{3}) and (\ref{4}) all together gives
\begin{align*}
&W_{n-2}(\Pi_{n-2}K)-\frac{n(n-2)\omega_{n-1}^2}{(n-1)^2\omega_n}W^2_{n-1}(K)-\frac{\omega_{n-1}^2}{(n-1)^2}W_{n-2}(K)\\
&=\frac{\omega_{n-1}^2}{n(n-1)}\int\limits_{S^{n-1}}h_K\mathcal{R}^2\Box h_K dx-\frac{(n-2)\omega_{n-1}^2}{n(n-1)^2\omega_n}\left(\int\limits_{S^{n-1}}h_Kdx\right)^2-\frac{\omega_{n-1}^2}{n(n-1)^3}\int\limits_{S^{n-1}}h_K\Box h_{K}dx\\
&=\frac{\omega_{n-1}^2}{n(n-1)^3}\int\limits_{S^{n-1}}h_K\left((n-1)^2\mathcal{R}^2\Box h_K+\frac{(2-n)(n-1)}{\omega_n}\int\limits_{S^{n-1}}h_Kdx-\Box h_K\right)dx\\
&=-c_n\int\limits_{S^{n-1}}h_K\mathcal{L}h_Kdx.
\end{align*}
Note that $\mathcal{L}1=0$. Assume $Y_k$ is a spherical Harmonic of degree $k\geq 1$. Since $\int_{S^{n-1}}Y_kdx=0,$ from $\Box Y_k=-(k-1)(n+k-1)Y_k$ and $\mathcal{R}Y_k=(-1)^{k/2}v_{k,n}Y_k$ it follows that
\[\mathcal{L}Y_k=-(k-1)(n+k-1)(1-(n-1)^2v_{k,n}^2)Y_k.\]
Therefore, since $\mathcal{L}$ is linear and self-adjoint, if $K$ is of class $C^2_+$ then
\begin{align*}
(h_K,\mathcal{L}h_K)&=-\int\limits_{S^{n-1}}\sum_{k= 0}^{\infty} \pi_kh_K\left(\sum_{k=3}^{\infty} (k-1)(n+k-1)(1-v_{k,n}^2(n-1)^2)\pi_kh_K\right)dx\\
&=-\sum_{k=3}^{\infty}(k-1)(n+k-1)(1-v_{k,n}^2(n-1)^2)\|\pi_kh_K\|_2^2,
\end{align*}
and consequently for any $k\ge 3$ we obtain
\[W_{n-2}(\Pi_{n-2}K)-\frac{n(n-2)\omega_{n-1}^2}{(n-1)^2\omega_n}W^2_{n-1}(K)-\frac{\omega_{n-1}^2}{(n-1)^2}W_{n-2}(K)\geq c_{n,k}\|\pi_kh_K\|_2^2\geq 0.\]
The class of $C^{\infty}_+$ convex bodies is dense in $\mathrm{K}^n$. Thus a standard approximation shows that this last inequality holds for all convex bodies. Thus the left-hand side is zero precisely when the support function has the form
\[h_K=(\pi_0+\pi_1+\pi_2)h_K.\]
Note that, indeed, we have proved
\[\frac{d^2}{dt^2}\Big|_{t=0}\mathcal{P}(1+th_K)=\frac{d^2}{dt^2}\Big|_{t=0}\frac{V(\Pi(1+th_K))}{V(1+th_K)^{n-1}}=-c_n'\int\limits_{S^{n-1}}h_K\mathcal{L}h_Kdx\ge 0.\]
and $\operatorname{Ker}\mathcal{L}=\mathcal{S}^0\oplus\mathcal{S}^1\oplus\mathcal{S}^2.$ The $\operatorname{Ker}\mathcal{L}$ reflects the symmetries (the affine invariance) that $\mathcal{P}$ enjoys; $\mathcal{S}^0$, $\mathcal{S}^1$, and $\mathcal{S}^2$ are present in $\operatorname{Ker}\mathcal{L}$, respectively, because $\mathcal{P}$ is scaling-invariant, translation-invariant, and $\operatorname{Sl}_n$-invariant.
\bibliographystyle{amsplain}

\end{document}